\documentclass[hidelinks,onefignum,onetabnum]{siamart251216}
\usepackage{lipsum}
\usepackage{amsfonts}
\usepackage{graphicx}
\usepackage{epstopdf}
\usepackage{algpseudocode}
\usepackage{enumitem}
\usepackage[nameinlink,capitalize]{cleveref}
\ifpdf
  \DeclareGraphicsExtensions{.eps,.pdf,.png,.jpg}
\else
  \DeclareGraphicsExtensions{.eps}
\fi

\usepackage{amsfonts}
\usepackage{amsmath,amssymb,amsfonts,mathtools,bm}
\usepackage{mathrsfs}
\usepackage{bbm}
\usepackage{multirow}
\usepackage{array}
\usepackage{caption}
\usepackage{subcaption}
\usepackage{float}
\usepackage{enumitem}

\newcommand{\R}{\mathbb{R}}

\newcommand{\I}{\bm{I}}
\newcommand{\W}{\bm{W}}
\newcommand{\V}{\bm{V}}
\newcommand{\Z}{\bm{Z}}
\newcommand{\U}{\bm{U}}
\newcommand{\A}{\bm{A}}
\newcommand{\B}{\bm{B}}
\newcommand{\s}{\bm{s}}
\newcommand{\Hessian}{\bm{H}_f}
\newcommand{\approxH}{\tilde{\bm{H}}}
\newcommand{\param}{\bm{\theta}} 
\newcommand{\stepsize}{\Delta t}
\newcommand{\indexopt}{n}
\newcommand{\recomputefactor}{\kappa}

\newcommand{\paramdiff}{\bm{\theta}_{\indexopt+1}-\bm{\theta}_\indexopt}
\newcommand{\operator}{\mathcal{L}} 
\newcommand{\tilr}{\tilde{r}}

\newcommand{\approxrank}{m}

\newsiamremark{remark}{Remark}
\newsiamremark{hypothesis}{Hypothesis}
\crefname{hypothesis}{Hypothesis}{Hypotheses}
\newsiamthm{claim}{Claim}
\newsiamremark{fact}{Fact}
\crefname{fact}{Fact}{Facts}

\headers{
Nystr\"om-Enhanced RSAV}
{Ryo Sagawa, Daisuke Furihata, Yuto Miyatake}

\title{Accelerating SAV-based optimization via randomized low-rank Hessian approximation
}

\author{
Ryo Sagawa\thanks{Department of Pure and Applied Mathematics, Graduate School of Information Science and Technology, The University of Osaka
  (\email{sagawa@cas.cmc.osaka-u.ac.jp}).}
\and Daisuke Furihata\thanks{D3 Center, The University of Osaka
  (\email{daisuke.furihata.cmc@osaka-u.ac.jp}).}
\and Yuto Miyatake\thanks{D3 Center, The University of Osaka
  (\email{yuto.miyatake.cmc@osaka-u.ac.jp}).}
  }

\usepackage{amsopn}

\ifpdf
\hypersetup{
  pdftitle={
  Accelerating SAV-based optimization via randomized low-rank Hessian approximation
  },
  pdfauthor={Ryo Sagawa, Daisuke Furihata, and Yuto Miyatake}
}
\fi

\begin{document}

\maketitle

\begin{abstract}
We propose a new optimization method, the Nystr\"om-enhanced relaxed scalar auxiliary variable method (N-RSAV), which incorporates curvature information into the RSAV framework to accelerate convergence while preserving an unconditional modified energy dissipation law. 
Existing RSAV-based methods rely solely on first-order information and often suffer from slow convergence, particularly for ill-conditioned problems such as those arising in physics-informed neural networks (PINNs).
To address this limitation, 
we design the linear operator in the RSAV scheme using approximate Hessian information obtained from a randomized low-rank Nyström approximation.
To preserve the dissipation structure, we enforce positive semidefiniteness through eigenvalue truncation. 
Furthermore, we introduce an adaptive strategy that reuses the approximate Hessian based on the deviation between the original and modified energies, significantly reducing computational cost.
We also provide a convergence analysis of the RSAV scheme with a general positive semidefinite operator under the Polyak--\L{}ojasiewicz (PL) condition and establish corresponding convergence guarantees for N-RSAV under the PL condition and an additional convexity assumption.
Numerical experiments on ill-conditioned problems with effectively low-rank structure, including convex quadratic problems and training of PINNs, demonstrate that the proposed methods achieve substantially faster convergence than conventional RSAV-based approaches.
\end{abstract}

\begin{keywords}
optimization; SAV method; Nystr\"om method; physics-informed neural networks 
\end{keywords}

\begin{MSCcodes}
65K10, 90C53, 68T07
\end{MSCcodes}

\section{Introduction}
\label{sec:introduction}
We consider the unconstrained optimization problem
\begin{equation}
    \min_{\param \in \R^d} f(\param),
    \label{eq:opt_problem}
\end{equation}
where $f: \R^d \to \R$ is assumed to be sufficiently smooth and to admit a global minimizer $\param^\ast$. 
Such unconstrained optimization problems naturally arise in various fields, including machine learning~\cite{ml_opt} and inverse problems~\cite{inverse_problem_opt}. 
A fundamental approach to solving~\eqref{eq:opt_problem} is gradient descent, which iteratively updates the parameter $\param_\indexopt$ as
\begin{equation}
\param_{\indexopt+1} = \param_\indexopt - \stepsize \nabla f(\param_\indexopt)  \quad (\indexopt \geq 0), \notag
\end{equation}
where $\stepsize > 0$ is the step size. 
This scheme can be interpreted as the explicit Euler discretization of the gradient flow:
\begin{equation}
\frac{d\param(t)}{dt} = -\nabla f(\param(t)). \label{eq:gradient_flow}
\end{equation}
The solution to the gradient flow~\eqref{eq:gradient_flow} satisfies the dissipation law
\begin{equation}
\frac{d}{dt} f(\param(t)) \leq 0, \notag
\end{equation}
which ensures that the objective values decrease monotonically over time. 
However, in gradient descent, the dissipation property is generally lost if the step size $\stepsize$ is chosen too large, which can lead to instability. 
Therefore, selecting an appropriate step size that balances convergence speed and stability has long been an important issue. Classical approaches include backtracking line search based on the Armijo condition~\cite{back}, while more recent developments include adaptive methods such as RMSprop~\cite{rmsprop}.

In this paper, we consider the scalar auxiliary variable (SAV) approach~\cite{sav}.
Originally developed as a structure-preserving discretization method for gradient flows, the SAV framework has recently been applied to optimization problems~\cite{sav_opt,sav_opt2,sav_opt3}.
By introducing an auxiliary variable $r_\indexopt$, the SAV scheme enforces the dissipation of a modified energy $r_\indexopt^2$, rather than the original energy $f(\param_\indexopt)$.
In general, the method permits relatively large step sizes, which may improve convergence performance.
Moreover, the introduction of the auxiliary variable makes the resulting iteration scheme either explicit or linearly implicit, thereby avoiding fully nonlinear updates.

However, existing SAV-based optimization methods rely only on first-order information, namely the gradient of the objective function, and thus their convergence speed may still be limited. 
In particular, for problems where the Hessian of the objective function is ill-conditioned, such as those arising in physics-informed neural networks (PINNs)~\cite{pinns}, gradient-based methods often fail to capture curvature information adequately, resulting in very slow convergence~\cite{pinns_second_order_opt1,pinns_second_order_opt2}.

To overcome this limitation, we propose a new optimization method that incorporates approximate Hessian information into the SAV framework. 
Specifically, we construct an approximate Hessian using the Nyström method~\cite{Nystrom}, a randomized low-rank approximation technique, together with eigenvalue truncation to ensure positive semidefiniteness. This design enables us to exploit local curvature information near a minimizer while maintaining both computational efficiency and the modified dissipation property of the SAV scheme.
To further reduce the computational cost, we also introduce an adaptive strategy that reuses the approximate Hessian based on the deviation between the original and modified energies.

The remainder of this paper is organized as follows. Section~\ref{sec:preliminaries} reviews optimization methods based on the SAV approach and its relaxed variant, the RSAV approach, and discusses the limitations of the RSAV scheme.  Section~\ref{sec:proposed_method} presents the proposed N-RSAV method and its adaptive variant, AN-RSAV, and provides remarks on the proposed methods. 
Section~\ref{sec:convergence_analysis} analyzes the convergence of RSAV with a general positive semidefinite operator $\operator_\indexopt$ under the PL condition, and then applies the result to the proposed N-RSAV method under the PL condition and an additional convexity assumption.
Section~\ref{sec:numerics}  demonstrates the effectiveness of the proposed methods through two numerical experiments: optimization of a quadratic function and training of PINNs, followed by conclusions in Section~\ref{sec:conclusion}.

Throughout the paper, $\|\cdot \|$ denotes the L2 norm for vectors and the spectral norm for matrices. 
For a positive definite matrix $\A \in \R^{d \times d}$, we define the $A$-weighted norm: $\|\param\|_{\A} \coloneqq \sqrt{\param^\top \A \param}  \quad (\param \in \R^d)$. 
The identity matrix in $\R^{d \times d}$ is denoted by $\I^{(d)}\in \R^{d \times d}$. 
For symmetric matrices $\A,\B \in \R^{d\times d}$, 
\(\A \preceq \B\) denotes the Loewner order, meaning that $\B-\A$ is positive semidefinite.
Equivalently, \(\A \preceq \B\) holds if and only if 
$
\param^\top \A\param \le \param^\top \B\param$ for all $\param \in \R^d$.
In particular, $\A \succeq \bm{0}$ indicates that $\A$ is positive semidefinite.

\section{Preliminaries: SAV/RSAV-based optimization methods}
\label{sec:preliminaries}
In this section, we review the SAV- and RSAV-based  optimization methods~\cite{sav_opt}.

\subsection{SAV/ RSAV scheme}
We introduce the scalar auxiliary variable:
\begin{equation}
    r(t) := \sqrt{f(\param(t)) + C}, 
     \notag
\end{equation}
where $C$ is a constant such that $C > -\min_{\param \in \R^d} f(\param)$. We then split the objective function into two terms: 
\begin{equation}
    f(\param(t)) = \frac{1}{2}\param(t)^\top \operator(t)\param(t) + \left[ f(\param(t)) - \frac{1}{2}\param(t)^\top \operator(t)\param(t) \right],
   \notag
\end{equation}
where $\operator(t) : \R^d \rightarrow \R^d$ is an arbitrary symmetric positive semidefinite linear operator, which might also depend on $\param(t)$.
Then the gradient flow~\eqref{eq:gradient_flow} can be reformulated as the following extended system:
\begin{equation}
\begin{cases}
    \dfrac{dr}{dt} = \dfrac{1}{2\sqrt{f(\param(t))+C}}\nabla f(\param(t))^\top \dfrac{d\param}{dt}, \\
    \dfrac{d\param}{dt} = -\operator(t)\param(t) - \dfrac{r(t)}{\sqrt{f(\param(t))+C}}\nabla f(\param(t)) + \operator(t)\param(t).
\end{cases}
\label{eq:extended_system}
\end{equation}
A standard SAV scheme is obtained by discretizing the extended system~\eqref{eq:extended_system} with a step size $\stepsize > 0$:
\begin{equation}
 \left\{ \ 
 \begin{aligned}
    &\frac{r_{\indexopt+1}- r_\indexopt}{\stepsize} = \frac{1}{2\sqrt{f(\param_\indexopt)+C}}\nabla f(\param_\indexopt)^\top \frac{\param_{\indexopt+1}-\param_\indexopt}{\stepsize},\\
    &\frac{\param_{\indexopt+1}-\param_\indexopt }{\stepsize}= -\operator_\indexopt \param_{\indexopt+1}- \frac{r_{\indexopt+1}}{\sqrt{f(\param_\indexopt)+C}}\nabla f(\param_\indexopt)+\operator_\indexopt \param_\indexopt,
\end{aligned}
 \right.   
\label{eq:sav_discrete_scheme}
\end{equation}
where $\operator_\indexopt$ is the matrix representation of $\operator(t_n)$ and $t_\indexopt$ denotes the time associated with the approximation $\param_\indexopt$.
Equivalently, the scheme~\eqref{eq:sav_discrete_scheme} can be written as
\begin{equation}
 \left\{ \ 
 \begin{aligned}
    &r_{\indexopt+1} = \left(1 + \stepsize \dfrac{\nabla f(\param_\indexopt)^\top \bm{A}_\indexopt^{-1} \nabla f(\param_\indexopt)}{2(f(\param_\indexopt)+C)}\right)^{-1} r_\indexopt,\\
    &\param_{\indexopt+1} = \param_\indexopt - \stepsize \dfrac{ r_{\indexopt+1}}{\sqrt{f(\param_\indexopt)+C}} \A_\indexopt^{-1}\nabla f(\param_\indexopt),
\end{aligned}
 \right.   
\label{eq:sav_scheme}
\end{equation}
where $\bm{A}_\indexopt \coloneqq \I^{(d)} + \stepsize \operator_\indexopt $.
In the SAV scheme~\eqref{eq:sav_scheme}, the search direction is given by $\A_\indexopt^{-1}\nabla f(\param_\indexopt)$, and thus, it is typically necessary to solve a linear system $\A_\indexopt\bm{x}=\nabla f(\param_\indexopt)$ at each iteration. As $\bm{A}_\indexopt^{-1}$ is positive definite and $f(\param_\indexopt)+C>0$, the first equation in the scheme \eqref{eq:sav_scheme} implies that $0< r_{n+1} \leq r_n$.
The positivity follows provided that $r_0>0$, which is typically satisfied since one usually chooses $r_0 = \sqrt{f(\param_0)+C}$.

\begin{theorem}[\cite{sav_opt, sav}]
If $\operator_\indexopt$ is positive semidefinite, the SAV scheme~\eqref{eq:sav_scheme} satisfies a modified dissipation law: 
\begin{equation}
     r_{\indexopt+1}^2 -r_\indexopt^2  \le 0. \notag
\end{equation}
\end{theorem}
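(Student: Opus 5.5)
The plan is to use the discrete energy identity obtained by pairing the two equations of \eqref{eq:sav_discrete_scheme}. In SAV analyses this is the more natural route than the closed form \eqref{eq:sav_scheme}, although the closed form also gives a one-line argument, which I record as a check.

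First, multiply the first equation of \eqref{eq:sav_discrete_scheme} by $2\stepsize\, r_{\indexopt+1}$ to get
\[
2 r_{\indexopt+1}(r_{\indexopt+1}-r_\indexopt) = \frac{r_{\indexopt+1}}{\sqrt{f(\param_\indexopt)+C}}\nabla f(\param_\indexopt)^\top (\param_{\indexopt+1}-\param_\indexopt).
\]
Next, take the inner product of the second equation with $\param_{\indexopt+1}-\param_\indexopt$. The term $\frac{r_{\indexopt+1}}{\sqrt{f(\param_\indexopt)+C}}\nabla f(\param_\indexopt)^\top(\param_{\indexopt+1}-\param_\indexopt)$ then equals
\[
-\frac{1}{\stepsize}\|\param_{\indexopt+1}-\param_\indexopt\|^2 - (\param_{\indexopt+1}-\param_\indexopt)^\top \operator_\indexopt(\param_{\indexopt+1}-\param_\indexopt).
\]
Substituting this into the first identity and applying the algebraic identity $2a(a-b) = a^2-b^2+(a-b)^2$ gives
\[
r_{\indexopt+1}^2 - r_\indexopt^2 = -(r_{\indexopt+1}-r_\indexopt)^2 - \frac{1}{\stepsize}\|\param_{\indexopt+1}-\param_\indexopt\|^2 - \|\param_{\indexopt+1}-\param_\indexopt\|_{\operator_\indexopt}^2.
\]
Here $\|\cdot\|_{\operator_\indexopt}^2$ means the quadratic form of $\operator_\indexopt$, which is nonnegative because $\operator_\indexopt\succeq\bm{0}$ and $\operator_\indexopt$ is symmetric. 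Every term on the right is therefore nonpositive, which proves the claim.

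The consistency check uses the closed form \eqref{eq:sav_scheme}. Since $\operator_\indexopt\succeq\bm{0}$, the matrix $\A_\indexopt=\I^{(d)}+\stepsize\operator_\indexopt$ is symmetric positive definite, so $\A_\indexopt^{-1}$ is positive definite. Together with $f(\param_\indexopt)+C>0$, this makes the factor multiplying $r_\indexopt$ lie in $(0,1]$. Hence $|r_{\indexopt+1}|\le|r_\indexopt|$ and $r_{\indexopt+1}^2\le r_\indexopt^2$, without even needing $r_0>0$.

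There is no real obstacle. The only point requiring care is verifying that symmetry and positive semidefiniteness of $\operator_\indexopt$ are exactly what is used, both for the sign of the quadratic form and for the invertibility of $\A_\indexopt$, and that $\sqrt{f(\param_\indexopt)+C}$ is well defined because of the choice of $C$.
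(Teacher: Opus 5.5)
Your proof is correct. Pairing the two equations of \eqref{eq:sav_discrete_scheme} gives exactly
\[
r_{n+1}^2-r_n^2=-(r_{n+1}-r_n)^2-\frac{1}{\Delta t}\|\bm{\theta}_{n+1}-\bm{\theta}_n\|^2-(\bm{\theta}_{n+1}-\bm{\theta}_n)^\top\mathcal{L}_n(\bm{\theta}_{n+1}-\bm{\theta}_n),
\]
and your closed-form check is also valid.

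The paper does not prove this cited theorem. Its only in-text justification is the remark just before the statement: $\bm{A}_n^{-1}\succ\bm{0}$ and $f(\bm{\theta}_n)+C>0$ put the factor in the first line of \eqref{eq:sav_scheme} in $(0,1]$. That is precisely your ``consistency check.'' Your main route, the standard SAV energy argument from the cited literature, is therefore genuinely different from what the paper writes down.

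The contraction-factor argument is a one-liner. With $r_0>0$ it also gives positivity, $0<r_{n+1}\le r_n$, which the paper uses later. But it carries no quantitative information about how much energy is dissipated.

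Your identity does quantify it: $r_{n+1}^2-r_n^2=-(r_{n+1}-r_n)^2-G_n$, with $G_n$ exactly as defined in the RSAV theorem. The same computation with $\tilde r_{n+1}$ in place of $r_{n+1}$ gives the paper's formula $G_n=-2(\tilde r_{n+1}-r_n)\tilde r_{n+1}$. The paper needs that formula for the RSAV dissipation law $r_{n+1}^2-r_n^2\le-(1-\eta)G_n$. It also needs it for the step bound $\|\bm{\theta}_k-\bm{\theta}_{k-1}\|^2\le\frac{\Delta t}{1-\eta}(r_{k-1}^2-r_k^2)$ in Lemma~\ref{thm:positive_lower_bound}.

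So your route is the one that extends to the relaxed scheme and feeds the convergence analysis. The paper's remark is the quicker route for the bare inequality and for the sign of $r_n$.
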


\begin{remark}
    For the continuous equation \eqref{eq:extended_system}, the relation $r(t)^2 = f(\param (t))+C$ holds for all $t\geq 0$.
    However, in the discrete setting, even if the initial value is chosen as $r_0 = \sqrt{f(\param_0)+C}$, the quantities $r_n^2$ and $f(\param_\indexopt)+C$ deviate from each other during the iterations of the SAV scheme.

    In what follows, we refer to $f(\param_\indexopt)$ as the \emph{original} energy and to $r_\indexopt^2$ as the \emph{modified} energy.
\end{remark}

To mitigate this discrepancy between $r_n$ and $\sqrt{f(\param_\indexopt)+C}$, a relaxed SAV (RSAV) scheme was introduced by incorporating a relaxation step~\cite{rsav_1, rsav_2}.
The RSAV scheme is given by
\begin{equation} 
    \left\{ \ 
 \begin{aligned}
     &  \tilde{r}_{\indexopt+1} = \left({1 + \stepsize \frac{\nabla f(\param_\indexopt)^{\top}\bm{A}_\indexopt^{-1}\nabla f (\param_\indexopt)}{2(f(\param_\indexopt)+C)}}\right)^{-1}r_\indexopt, \\
    &\param_{n+1} =\param_\indexopt - \stepsize \frac{\tilde{r}_{\indexopt+1}}{\sqrt{f(\param_\indexopt) +C}}\bm{A}_n^{-1}\nabla f(\param_\indexopt), \\
    & r_{\indexopt+1} = \xi_\indexopt \tilde{r}_{\indexopt+1} + (1-\xi_\indexopt)\sqrt{f(\param_{\indexopt+1})+C},
 \end{aligned}
 \right.    \label{eq:rsav_scheme}
\end{equation}
where the relaxation parameter $\xi_\indexopt \in [0,1]$ is chosen to control the discrepancy between the modified and original energies.

The relaxation scheme has the following properties.

First, under a mild assumption on $\xi_\indexopt$, the modified dissipation property still holds.

\begin{theorem}[\cite{sav_opt,rsav_1}]
Suppose that 
$\xi_\indexopt$ satisfies the condition:
\begin{equation}
r_{\indexopt+1}^2 - \tilr_{\indexopt+1}^2 - (\tilr_{\indexopt+1}- r_\indexopt)^2 \le \eta G_\indexopt,
\label{eq:cond}
\end{equation}
where
\begin{equation*}
    G_\indexopt \coloneqq \frac{1}{\stepsize}\|\paramdiff\|^2 +(\paramdiff)^\top \operator_\indexopt(\paramdiff) 
    = -2 (\tilde{r}_{\indexopt+1}-r_\indexopt)\tilde{r}_{\indexopt+1}
    \ge 0,
\end{equation*}
and $\eta \in [0,1)$ is a prescribed constant.
If $\operator_\indexopt$ is positive semidefinite, the RSAV scheme \eqref{eq:rsav_scheme} satisfies a modified dissipation law: 
\begin{equation}
     r_{\indexopt+1}^2 -r_\indexopt^2 \le - (1-\eta)G_\indexopt \le 0. \label{eq:rsav_modified_dissipation_law}
\end{equation}
\end{theorem}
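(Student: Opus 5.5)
The plan has two parts: first verify the identity for $G_\indexopt$ that is asserted inside the statement, then combine it with condition \eqref{eq:cond} through an elementary algebraic identity.

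\emph{Step 1: the identity for $G_\indexopt$.} Write $\bm{\delta} \coloneqq \paramdiff$ and $g \coloneqq \nabla f(\param_\indexopt)/\sqrt{f(\param_\indexopt)+C}$. By the second line of \eqref{eq:rsav_scheme}, $\A_\indexopt \bm{\delta} = -\stepsize\, \tilr_{\indexopt+1} g$, which is the same as
\[
\bm{\delta}/\stepsize + \operator_\indexopt \bm{\delta} = -\tilr_{\indexopt+1} g .
\]
The first line of \eqref{eq:rsav_scheme} can be rearranged to $\tilr_{\indexopt+1}-r_\indexopt = -\frac{\stepsize}{2}\,\tilr_{\indexopt+1}\, g^\top \A_\indexopt^{-1} g$. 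Using $\bm{\delta} = -\stepsize\,\tilr_{\indexopt+1}\A_\indexopt^{-1}g$, this becomes $\tilr_{\indexopt+1}-r_\indexopt = \tfrac12 g^\top \bm{\delta}$. This is just the discrete SAV relation $\frac{\tilr_{\indexopt+1}-r_\indexopt}{\stepsize}=\frac12 g^\top\frac{\bm{\delta}}{\stepsize}$ from \eqref{eq:sav_discrete_scheme}. Now take the inner product of the displayed equation with $\bm{\delta}$:
\[
G_\indexopt = \tfrac{1}{\stepsize}\|\bm{\delta}\|^2 + \bm{\delta}^\top\operator_\indexopt\bm{\delta} = -\tilr_{\indexopt+1}\, g^\top \bm{\delta} = -2(\tilr_{\indexopt+1}-r_\indexopt)\tilr_{\indexopt+1}.
\]
Nonnegativity of $G_\indexopt$ follows directly from its definition, since $\operator_\indexopt \succeq \bm{0}$ and $\stepsize>0$.

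\emph{Step 2: the dissipation law.} I will use the elementary identity
\[
\tilr_{\indexopt+1}^2 - r_\indexopt^2 = 2(\tilr_{\indexopt+1}-r_\indexopt)\tilr_{\indexopt+1} - (\tilr_{\indexopt+1}-r_\indexopt)^2 = -G_\indexopt - (\tilr_{\indexopt+1}-r_\indexopt)^2 .
\]
Then split
\[
r_{\indexopt+1}^2 - r_\indexopt^2 = \bigl[r_{\indexopt+1}^2 - \tilr_{\indexopt+1}^2 - (\tilr_{\indexopt+1}-r_\indexopt)^2\bigr] + \bigl[\tilr_{\indexopt+1}^2 - r_\indexopt^2 + (\tilr_{\indexopt+1}-r_\indexopt)^2\bigr].
\]
Condition \eqref{eq:cond} bounds the first bracket by $\eta G_\indexopt$, and the identity above shows that the second bracket equals $-G_\indexopt$. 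Adding the two gives $r_{\indexopt+1}^2-r_\indexopt^2 \le -(1-\eta)G_\indexopt$. Since $\eta<1$ and $G_\indexopt\ge 0$, the right-hand side is $\le 0$, which is \eqref{eq:rsav_modified_dissipation_law}.

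\emph{Main obstacle.} The only delicate point is Step 1: the SAV update for $\tilr_{\indexopt+1}$ must be rewritten correctly as $\tilr_{\indexopt+1}-r_\indexopt=\frac12 g^\top\bm{\delta}$. In particular, the factor $\tilr_{\indexopt+1}$ must appear on the correct side, so the explicit formula has to be checked to be equivalent to the implicit form in \eqref{eq:sav_discrete_scheme}, with $r_{\indexopt+1}$ there replaced by $\tilr_{\indexopt+1}$. Once that is done, the remaining steps are algebra.
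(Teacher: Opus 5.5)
Your proof is correct. The paper gives no proof of its own for this cited result; it only records the identity $G_\indexopt=-2(\tilr_{\indexopt+1}-r_\indexopt)\tilr_{\indexopt+1}$. You derive that identity properly by testing the $\param$-update against $\paramdiff$ and rewriting the first line of the scheme in its implicit SAV form. The dissipation law then follows as you show, from $\tilr_{\indexopt+1}^2-r_\indexopt^2=-G_\indexopt-(\tilr_{\indexopt+1}-r_\indexopt)^2$ combined with the relaxation condition, which is the standard argument for this result.
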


Typically, $\eta$ is chosen to be close to $1$.

Second, we illustrate how the relaxation step controls the discrepancy between the modified and original energies.

Substituting the update formula of $r_{n+1}$, namely, $r_{\indexopt+1}=\xi_\indexopt \tilde{r}_{\indexopt+1} + (1-\xi_\indexopt)\sqrt{f(\param_{\indexopt+1})+C}$,
into the condition~\eqref{eq:cond},
we obtain the following quadratic inequality for
$\xi_\indexopt$:
\begin{equation}
a\xi_\indexopt^2 + b\xi_\indexopt + c \le 0.
\label{eq:cond_equivalent}
\end{equation}
Here, the coefficients are given by
\begin{equation}
 \begin{split}
        &a = \left(\tilde{r}_{\indexopt+1}-\sqrt{f(\param_{\indexopt+1})+C} \right)^2 ,\\
        &b=2 \sqrt{f(\param_{\indexopt+1})+C}\left(\tilde{r}_{\indexopt+1} - \sqrt{f(\param_{\indexopt+1})+C}\right),\\
        &c = f(\param_{\indexopt+1})+C - (\tilde{r}_{\indexopt+1})^2 +2 \eta(\tilde{r}_{\indexopt+1} - r_\indexopt)\tilde{r}_{\indexopt+1} . \notag
        \label{eq:abc}  
    \end{split}
\end{equation}

To maximize the correction effect while maintaining the constraint~\eqref{eq:cond_equivalent}, the relaxation parameter $\xi_\indexopt$ is chosen as the solution to the constrained optimization problem:
\begin{equation}
\min_{\xi \in [0,1]} \xi
\quad
\text{subject to}
\quad
a\xi^2 + b\xi + c \le 0 .
\notag
\end{equation}
This choice minimizes the weight assigned to $\tilde{r}_{\indexopt+1}$ and therefore maximizes the contribution of $\sqrt{f(\param_{\indexopt+1})+C}$, yielding the strongest correction toward the original energy.
The resulting parameter is given by
\begin{equation}
\xi_\indexopt =
\begin{cases}
\displaystyle
\max\left\{0, \frac{-b - \sqrt{b^2 - 4ac}}{2a}\right\} \quad & (a \neq 0) ,\\
0 \quad&  (a = 0).
\end{cases}
\notag
\end{equation}

\subsection{\texorpdfstring{The limitations of the RSAV scheme arising from the choice of the operator $\operator_\indexopt$}{Choice of the operator Ln}}
\label{subsec:limit_rsav}

There are many degrees of freedom in the choice of the operator $\operator_\indexopt$.
In previous studies applying RSAV to optimization problems, a simple choice,
$\operator_\indexopt = \lambda \I^{(d)}$ with a nonnegative hyperparameter $\lambda \ge 0$, was considered~\cite{sav_opt,sav_opt3}.
In this case, the update formula for $\param_\indexopt$ in the RSAV scheme \eqref{eq:rsav_scheme} simplifies to
\begin{equation*}
    \param_{\indexopt+1} = \param_\indexopt - \frac{\tilde{r}_{\indexopt+1}}{\sqrt{f(\param_\indexopt) +C}}
    \frac{\stepsize}{1 + \stepsize \lambda}
    \nabla f(\param_\indexopt).
\end{equation*}
However, this update can be viewed simply as an adaptive gradient descent method with the effective step size
\begin{equation*}
    \frac{\tilde{r}_{\indexopt+1}}{\sqrt{f(\param_\indexopt) +C}}
    \frac{\stepsize}{1 + \stepsize \lambda}.
\end{equation*}
Consequently, this approach inherits many of the well-known drawbacks of gradient descent. 
For example, when the Hessian of the objective function is ill-conditioned, the convergence rate can deteriorate significantly.

\section{Nystr\"om-enhanced RSAV method and adaptive Hessian reuse strategy}
\label{sec:proposed_method}

In this section, we propose an enhanced RSAV method that incorporates Hessian information. We also discuss a strategy for reducing the computational cost.

\subsection{Nystr\"om-enhanced RSAV method}
\label{subsec:proposed_method}

To address the limitation discussed in subsection~\ref{subsec:limit_rsav}, 
we design the operator $\operator_\indexopt$ using
Hessian information
$\Hessian(\param_\indexopt) \coloneqq \nabla^2 f(\param_\indexopt)$.
However, directly employing the exact Hessian presents two major challenges: (i) increased computational cost arising from both the construction of the Hessian and the solution of the associated linear systems $(\I^{(d)} + \stepsize  \,\Hessian(\param_\indexopt))\bm{x} = \nabla f(\param_\indexopt)$ at each iteration; and (ii) the potential loss of the modified dissipation law of the RSAV scheme when the Hessian is indefinite.
 To overcome these issues, we propose using an approximate Hessian in place of the exact Hessian.

To mitigate the first issue (i), we employ the Nyström method, a randomized low-rank approximation technique, to efficiently approximate the Hessian~\cite{Nystrom}.
The Nyström method approximates the Hessian by a matrix of rank at most $m \ll d$:
\begin{equation}
    \Hessian(\param_\indexopt) \approx \V_\indexopt \W_\indexopt^{\dagger} \V_\indexopt^\top,
    \notag
\end{equation}
where $\V_\indexopt \in \R^{d\times \approxrank}$ and $\W_\indexopt\in\R^{m\times m}$ are submatrices of the Hessian, and $\W_\indexopt^{\dagger}$ denotes the Moore--Penrose pseudoinverse of $\W_\indexopt$.
More precisely, let $[d] = \{1,2,\ldots,d\}$ be the set of column indices of the Hessian $\Hessian(\param_\indexopt) \in \mathbb{R}^{d \times d}$. 
We construct an index set $\mathcal{M} = \{\ell_1, \ell_2, \ldots, \ell_\approxrank\} \subset [d]$ by sampling $\approxrank \ll d$ indices uniformly at random.
The matrix $\V_\indexopt \in \mathbb{R}^{d \times \approxrank}$ is formed by extracting the columns of the Hessian indexed by $\mathcal{M}$:
\begin{equation}
\V_\indexopt = \left( \frac{\partial^2 f(\param_\indexopt)}{\partial \theta_i \partial \theta_{\ell_j}} \right)_{1 \le i \le d, 1 \le j \le \approxrank}. \label{eq:nystrom_sub_mat1}
\end{equation}
Similarly, the matrix $\W_n \in \mathbb{R}^{\approxrank \times \approxrank}$ is defined as the principal submatrix corresponding to $\mathcal{M}$:
\begin{equation}
\W_\indexopt = \left( \frac{\partial^2 f(\param_\indexopt)}{\partial \theta{\ell_k} \partial \theta_{\ell_j}} \right)_{1 \le k,j \le \approxrank}. \notag
\end{equation}
Note that it is not necessary to explicitly compute the full Hessian matrix to construct $\V_\indexopt$ in \eqref{eq:nystrom_sub_mat1}. 
Indeed, the $j$-th column of $\V_\indexopt$ is precisely the $\ell_j$-th column of the Hessian matrix. 
Therefore, for each $j \in \{1,2,\ldots,m\}$, it suffices to compute the Hessian-vector product
$
\Hessian(\param_\indexopt)\bm{e}_{\ell_j},$
where $\bm{e}_{\ell_j}\in \R^d$ denotes the standard basis vector whose $\ell_j$-th entry is one and whose remaining entries are zero.
Furthermore, these Hessian-vector products can be computed efficiently without explicitly computing the full Hessian~\cite{hvp,ito2019adjoint}.

To resolve the second issue (ii), we need to enforce positive semidefiniteness of the approximate Hessian.
This is achieved by performing an eigenvalue decomposition and setting all negative eigenvalues to zero. The procedure is summarized below.

We perform an eigenvalue decomposition of $\W_\indexopt$ to obtain
\begin{equation}
    \W_\indexopt = \U_\indexopt \bm{\Sigma}_\indexopt \U_\indexopt^\top. \notag
\end{equation}
Here, $\U_\indexopt \in \R^{m \times m}$ is an orthogonal matrix whose columns are the eigenvectors of $\W_\indexopt$, and $\bm{\Sigma}_\indexopt = \mathrm{diag}(\lambda_1, \ldots, \lambda_m)$ is a diagonal matrix containing the corresponding eigenvalues.
Since $\W_\indexopt \in \mathbb{R}^{\approxrank \times \approxrank}$ is a small matrix, its eigenvalue decomposition can be computed at a relatively low cost.

To enforce positive semidefiniteness, we define $\hat{\W}_\indexopt$ by truncating the negative eigenvalues of $\W_\indexopt$:
\begin{equation}
    \hat {\W}_\indexopt := \U_\indexopt \bm{\Sigma}_{\indexopt,+} \U_\indexopt^\top, \notag
\end{equation}
where
\begin{equation}
    \bm{\Sigma}_{n,+} = \mathrm{diag}(\lambda_1^+, \ldots, \lambda_\approxrank^+), \quad
    \lambda_i^+ = \begin{cases}
        \lambda_i\quad &(\lambda_i\ge 0), \\
        0 \quad &(\lambda_i\le0).
    \end{cases} \notag
\end{equation}
By construction, $\hat{\W}_\indexopt$ is positive semidefinite.

We then define the approximate Hessian by
\begin{equation}
    \approxH_\indexopt := \V_\indexopt \hat {\W}_\indexopt^{\dagger} \V_\indexopt^\top = \Z_\indexopt \Z_\indexopt^\top,
    \notag
\end{equation}
where
\begin{equation}
    \Z_\indexopt := \V_\indexopt \U_\indexopt \left(\bm{\Sigma}_{\indexopt,+}^{1/2}\right)^{\dagger}. \label{eq:mat_Z}
\end{equation}
Since $\approxH_\indexopt = \Z_\indexopt \Z_\indexopt^\top$, it follows immediately that $\approxH_\indexopt$ is positive semidefinite. Consequently, the operator $\operator_\indexopt = \tilde{\bm{H}}_\indexopt$ is also positive semidefinite, thereby preserving the modified dissipation law \eqref{eq:rsav_modified_dissipation_law}.

At each iteration, we need to solve a linear system of the form
$
\A_\indexopt \bm{x} = \nabla f(\param_\indexopt)
$.
A direct solution of this system requires $\mathcal{O}(d^3)$ operations, which becomes prohibitively expensive when $d$ is large.
To reduce this computational cost, we exploit the low-rank structure of the approximate Hessian and apply the Woodbury matrix identity. Specifically, we obtain
\begin{equation}
    \A_\indexopt^{-1}\nabla f(\param_\indexopt)
    = \nabla f(\param_\indexopt) - \Z_\indexopt \left(\frac{1}{\stepsize} \I^{(m)} + \Z_\indexopt^\top \Z_\indexopt \right)^{-1} \Z_\indexopt^\top \nabla f(\param_\indexopt).
    \label{eq:woodbury}
\end{equation}
The computational complexity of evaluating $\A_\indexopt^{-1} \nabla f(\param_\indexopt)$ via~\eqref{eq:woodbury} is $\mathcal{O}(m^2 d)$, which is substantially lower than the $\mathcal{O}(d^3)$ cost of solving the original linear system directly, since $m \ll d$.

We refer to the proposed method as the N-RSAV method. A summary of the N-RSAV method is provided in Algorithm~\ref{alg:proposed}.

\begin{algorithm}[tbp]
\caption{N-RSAV method}
\label{alg:proposed}
\begin{algorithmic}[1]
\Require Initial point $\param_0 \in \R^d$, step size $\Delta t > 0$, constant $C$, rank parameter $\approxrank$
\State $r_0 = \sqrt{f(\param_0)+C}$
\For{$n = 0,1,2,\dots,N-1$}
    \State Sample an index set $ \mathcal{M }\subset [d]$ with  $|\mathcal{M}| = m$ uniformly at random without replacement
    \State Compute $\V_\indexopt$ and $\W_\indexopt$ using Hessian-vector products 
    \State Compute $\hat{\W}_\indexopt$ by truncating the negative eigenvalues of $\W_\indexopt$
    \State Compute $\Z_\indexopt $ via~\eqref{eq:mat_Z}
    \State Compute $\A_\indexopt^{-1}\nabla f(\param_\indexopt)$ via~\eqref{eq:woodbury}
    \State Update $r_{\indexopt+1}$ and $\param_{\indexopt+1}$ by the RSAV scheme~\eqref{eq:rsav_scheme}
\EndFor
\State \Return $\param_N$
\end{algorithmic}
\end{algorithm}

\subsection{Adaptive Hessian reuse strategy based on  energy deviation}
\label{subsec:adaptive_proposed_method}
In the previous subsection~\ref{subsec:proposed_method}, we proposed a method 
that employs a randomized low-rank approximation for the Hessian.
Nevertheless, computing the approximate Hessian $\approxH_\indexopt$ at each iteration still incurs a non-negligible computational cost in the overall algorithm.
Therefore, it is desirable to reduce the frequency with which the approximate Hessian is constructed.

Our strategy is to reuse the same approximate Hessian as long as a certain criterion is satisfied.
To this end, we introduce an indicator that measures the discrepancy between  the shifted original energy $f(\param_\indexopt)+C$ and the modified energy $r_\indexopt^2$:
\begin{equation*}
    e_\indexopt = \left|\frac{r_\indexopt}{\sqrt{f(\param_\indexopt)+C}} - 1\right|.
\end{equation*}
By construction, $e_\indexopt$ is close to zero when the discrepancy is small and increases as the discrepancy becomes larger.
We use this indicator to determine whether the approximate Hessian should be reused or recomputed.
Specifically, if the current deviation $e_\indexopt$
 satisfies $e_\indexopt < \recomputefactor e_{\indexopt-1}$
 for a prescribed tolerance factor $\recomputefactor >1$, 
 the deviation is regarded as being under control, and we reuse the previous approximation: $\approxH_\indexopt = \approxH_{\indexopt-1}$.
 Otherwise, the increase in the deviation is regarded as significant, and a new approximate Hessian $\approxH_\indexopt$ is constructed.
 
 We refer to the resulting adaptive method as AN-RSAV. A summary of the AN-RSAV algorithm is provided in Algorithm~\ref{alg:proposed_adaptive_ver}.

\begin{algorithm}[tbp]
\caption{AN-RSAV method}
\label{alg:proposed_adaptive_ver}
\begin{algorithmic}[1]
\Require Initial point $\param_0 \in \R^d$, step size $\stepsize > 0$, constant $C$, rank parameter $\approxrank$,   tolerance factor $\recomputefactor >1$
\State $r_0 = \sqrt{f(\param_0)+C}, e_{-1}=0$
\For{$n = 0,1,2,\dots,N-1$}
\If{$e_\indexopt < \recomputefactor e_{\indexopt-1}$ }
    \State $\Z_\indexopt = \Z_{\indexopt-1}$
    \Else
    \State Sample an index set $ \mathcal{M }\subset [d]$ with  $|\mathcal{M}| = m$ uniformly at random without replacement
    \State Compute $\V_\indexopt$ and $\W_\indexopt$ using Hessian-vector products 
    \State Compute $\hat{\W}_\indexopt$ by truncating the negative eigenvalues of $\W_\indexopt$
    \State Compute $\Z_\indexopt $ via~\eqref{eq:mat_Z}
    \EndIf
    \State Compute $\A_\indexopt^{-1}\nabla f(\param_\indexopt)$ via~\eqref{eq:woodbury}
    \State Update $r_{\indexopt+1}$ and $\param_{\indexopt+1}$ by the RSAV scheme \eqref{eq:rsav_scheme}
\EndFor
\State \Return $\param_N$
\end{algorithmic}
\end{algorithm}

\subsection{Remarks}
\label{subsec:remarks}
In this subsection, we discuss the expected advantages and potential limitations of the proposed method. 

The expected advantages are twofold.
First, near a local minimizer, where the Hessian provides informative curvature information, the proposed scheme performs a curvature-aware scaling of the gradient, which is expected to accelerate convergence.
Second, since the proposed method employs a low-rank approximation of the Hessian, it may still provide effective curvature information even when the approximation rank $m$ is small, provided that the Hessian has an effectively low-rank structure.

On the other hand, to preserve positive semidefiniteness, the proposed method truncates the negative eigenvalues of the approximate Hessian. Consequently, information associated with directions of negative curvature is discarded.
As a result, when the objective function is nonconvex, the method may exhibit stagnation near saddle points~\cite{saddle_point}.

To address this issue, it can be advantageous in practice to combine the proposed method with first-order stochastic optimization methods such as Adam~\cite{adam}. 
A natural strategy is to use Adam during the initial stage of the optimization process to escape saddle points, and then switch to the proposed method once the iterates enter a region in which the objective function exhibits locally convex behavior around a local minimizer.

\section{Convergence analysis}
\label{sec:convergence_analysis}

In this section, we first analyze the convergence of the RSAV scheme with a general positive semidefinite operator $\operator_\indexopt$, and then analyze the proposed N-RSAV scheme, in which the Nystr\"om approximate Hessian $\tilde{\bm{H}}_\indexopt$ is used as $\operator_\indexopt$.
To the best of our knowledge, this is the first convergence analysis of an RSAV scheme with a general operator $\operator_\indexopt$.
Existing analyses have been restricted to specific choices of $\operator_\indexopt$; for example,
the previous study \cite{sav_opt2} considered only the case $\operator_\indexopt=\bm{0}$.

Throughout the convergence analysis, we use the  notation:
\begin{equation*}
    f_\ast \coloneqq \min_{\param\in\mathbb{R}^d} f(\param),
    \qquad
    \delta \coloneqq f_\ast + C >0.
\end{equation*}
We also recall the assumptions imposed on the objective function.

\begin{definition}[$L$-smoothness]
\label{def:L_smooth}
A function $f:\mathbb{R}^d\to\mathbb{R}$ is said to be $L$-smooth with $L>0$ if its gradient $\nabla f$ is Lipschitz continuous, that is, if
\[
    \|\nabla f(\param)-\nabla f(\bm{\phi})\| \le L\|\param-\bm{\phi}\|
\]
holds for all $\param,\bm{\phi}\in\mathbb{R}^d$.
\end{definition}

\begin{remark}
The following inequality is 
an equivalent characterization of $L$-smoothness:
\begin{equation}
    f(\bm{\phi})
    \le f(\param)+\nabla f(\param)^\top(\bm{\phi}-\param)
    +\frac{L}{2}\|\bm{\phi}-\param\|^2   \quad \text{for all} \quad \param, \bm{\phi} \in \R^d . \notag
\end{equation}
\end{remark}

\begin{definition}[Polyak--\L{}ojasiewicz (PL) condition]
A function $f:\mathbb{R}^d\to\mathbb{R}$ is said to satisfy the PL condition if there exists a constant $\mu>0$ such that
\begin{equation}
    \frac{1}{2}\|\nabla f(\param)\|^2
    \ge \mu\bigl(f(\param)-f_\ast\bigr), \notag
\end{equation}
holds for all $\param\in\mathbb{R}^d$.
\end{definition}

The PL condition generalizes strong convexity and is widely used in the convergence analysis of nonconvex optimization problems.
It is also known that, under suitable assumptions, the loss function of overparameterized neural networks satisfies this condition.

\subsection{\texorpdfstring{Results for RSAV with a general positive semidefinite operator $\operator_\indexopt$}{operator Ln}}

The following lemma will be used to prove the monotone decrease of the objective function and convergence to a minimizer.

\begin{lemma}
\label{thm:positive_lower_bound}
Assume that $f$ is $L$-smooth. Then there exists a constant $C_1>0$ such that, if the step size satisfies $\stepsize\le C_1$, the scalar auxiliary variable $r_\indexopt$ generated by the RSAV scheme converges and has a positive lower bound:
\begin{equation}
    \lim_{\indexopt\to\infty} r_\indexopt
    = r_\ast \ge \frac{\sqrt{\delta}}{2}>0 .
    \label{r_positive_lower_bound}
\end{equation}
\end{lemma}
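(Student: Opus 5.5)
Since $r_{\indexopt+1}^2 - r_\indexopt^2 \le -(1-\eta)G_\indexopt \le 0$ by the modified dissipation law \eqref{eq:rsav_modified_dissipation_law}, and $r_\indexopt \ge 0$, the sequence $r_\indexopt$ is monotonically nonincreasing and bounded below. It therefore converges to some $r_\ast \ge 0$, and all the work is in the lower bound $r_\ast \ge \sqrt{\delta}/2$. Nonnegativity of $r_{\indexopt+1}$ comes from $\tilr_{\indexopt+1}>0$ and $\xi_\indexopt\in[0,1]$, since $r_{\indexopt+1}$ is a convex combination of two positive numbers.

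\textbf{Telescoped estimates.} Summing the dissipation law gives
\[
(1-\eta)\sum_{\indexopt} G_\indexopt \le r_0^2 = f(\param_0)+C,
\]
so in particular $\sum_\indexopt \frac{1}{\stepsize}\|\paramdiff\|^2 \le r_0^2/(1-\eta)$, using $\operator_\indexopt \succeq \bm{0}$. I will lower-bound $r_\indexopt$ through the product structure of the first equation of \eqref{eq:rsav_scheme}. For $\tilr$ we have
\[
\tilr_{\indexopt+1} = r_\indexopt/(1+\stepsize\, q_\indexopt), \qquad q_\indexopt := \frac{\nabla f(\param_\indexopt)^\top \A_\indexopt^{-1}\nabla f(\param_\indexopt)}{2(f(\param_\indexopt)+C)} .
\]
For the relaxed variable, note that $r_{\indexopt+1}\ge \min\{\tilr_{\indexopt+1},\sqrt{f(\param_{\indexopt+1})+C}\}$, and that $\sqrt{f+C}\ge\sqrt{\delta}$. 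Hence either $r_{\indexopt+1}\ge\sqrt\delta$, or $r_{\indexopt+1}\ge \tilr_{\indexopt+1}$. So it suffices to control the product $\prod(1+\stepsize q_k)^{-1}$ over maximal runs of steps where $r$ stays below $\sqrt\delta$. Equivalently, I will show
\[
\sum_k \log(1+\stepsize q_k)\le \stepsize\sum_k q_k \le \log 2
\]
along any run starting from a value at least $\sqrt\delta$ (or from $r_0\ge\sqrt\delta$).

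\textbf{Main obstacle: bounding $\stepsize\sum_k q_k$.} This is the step I expect to be hardest. I will relate $q_k$ to $G_k$. Write $\bm d_k = \A_k^{-1}\nabla f(\param_k)$, so that $\param_{k+1}-\param_k = -\stepsize\frac{\tilr_{k+1}}{\sqrt{f(\param_k)+C}}\bm d_k$. Using $G_k = -2(\tilr_{k+1}-r_k)\tilr_{k+1}$ and the identity $r_k-\tilr_{k+1}=\stepsize q_k \tilr_{k+1}$, we get
\[
G_k = 2\stepsize q_k\tilr_{k+1}^2 .
\]
Therefore
\[
\stepsize q_k = \frac{G_k}{2\tilr_{k+1}^2}.
\]
On a run where $r$ stays above $\sqrt\delta/2$ (argued inductively, by contradiction at the first time it would drop below), we have $\tilr_{k+1}\ge$ something comparable to $\sqrt\delta/2$. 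This gives
\[
\stepsize\sum q_k\lesssim \frac{2}{\delta}\sum G_k \le \frac{2 r_0^2}{(1-\eta)\delta}.
\]
This is not small by itself, so the step-size restriction must be used. The sharper route uses $L$-smoothness. Along a run below $\sqrt\delta$, the relaxation forced $\xi_k$ small only when $f(\param_{k+1})+C$ is below $\tilr^2$. Descent of $f$ along the step, via the remark's quadratic upper bound with $\|\A_k^{-1}\|\le 1$ and $\stepsize\le C_1\sim 1/L$, gives
\[
f(\param_{k+1})\le f(\param_k) - c\,\stepsize\frac{\tilr_{k+1}}{\sqrt{f+C}}\|\bm d_k\|^2\cdot(\ldots).
\]
This lets me telescope $\stepsize\sum q_k$ against the total decrease $f(\param_0)-f_\ast$ divided by $\delta$, and compare with $r_0^2=f(\param_0)+C$. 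Choosing $C_1$ (depending on $L,\delta,f(\param_0),\eta$) so that the accumulated factor satisfies $\prod(1+\stepsize q_k)\le 2$ then yields $r_\indexopt\ge\sqrt\delta/2$ for all $\indexopt$, and hence $r_\ast\ge\sqrt\delta/2$.

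If this inductive route fails, my fallback is the cruder bound
\[
q_k\le \frac{\|\nabla f(\param_k)\|^2}{2\delta}\le \frac{L(f(\param_k)-f_\ast)}{\delta},
\]
which holds because $\|\A_k^{-1}\|\le1$ and $\|\nabla f\|^2\le 2L(f-f_\ast)$, combined with the summability of $\|\paramdiff\|^2$ to control the total.
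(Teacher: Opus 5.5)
\textbf{There is a genuine gap: none of your bounds on $\stepsize\sum_k q_k$ becomes small as $\stepsize\to 0$.}

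Your reduction to the tail is fine. Since $r_n$ is nonincreasing and $r_0=\sqrt{f(\param_0)+C}\ge\sqrt{\delta}$, the indices with $r_n<\sqrt{\delta}$ form a single tail. On that tail $r_{n+1}\ge\tilr_{n+1}=r_n/(1+\stepsize q_n)$, and your identity $G_k=2\stepsize\, q_k\tilr_{k+1}^2$ is correct. The problem is that no choice of $C_1$ then gives $\prod_k(1+\stepsize q_k)\le 2$:
\begin{itemize}
\item The $G_k$ route gives $2r_0^2/((1-\eta)\delta)$, as you note yourself.
\item The descent route gives $f(\param_k)-f(\param_{k+1})\ge\frac{\alpha_k}{2}\|\nabla f(\param_k)\|_{\A_k^{-1}}^2=\stepsize\, q_k\tilr_{k+1}\sqrt{f(\param_k)+C}$, where $\alpha_k=\stepsize\tilr_{k+1}/\sqrt{f(\param_k)+C}$. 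This yields only $\stepsize\sum_k q_k\le 2(f(\param_0)-f_\ast)/\delta$, again independent of $\stepsize$.
\item The fallback has the same defect. $\|\param_{k+1}-\param_k\|^2$ is of order $\stepsize^2\|\nabla f(\param_k)\|^2$, while $\stepsize q_k$ is of order $\stepsize\|\nabla f(\param_k)\|^2$. So $\sum_k\|\param_{k+1}-\param_k\|^2=O(\stepsize)$ only gives $\stepsize\sum_k q_k=O(1)$. Even that comparison needs an upper bound on $\operator_k$, which the lemma does not assume.
\end{itemize}
This is not a matter of constants. For small $\stepsize$ the iteration tracks the gradient flow, along which $\frac{d}{dt}\log\sqrt{f+C}=-q$. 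For a strongly convex quadratic, say, the full product $\prod_k(1+\stepsize q_k)$ is therefore close to $\sqrt{(f(\param_0)+C)/\delta}$, which exceeds $2$ whenever $f(\param_0)+C>4\delta$. Smallness can only come from knowing that, at the start of the tail, $f+C$ is already within $O(\stepsize)$ of $\delta$. That requires control of the gap between $r_n$ and $\sqrt{f(\param_n)+C}$, and your proposal never establishes it.

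\textbf{The missing ingredient is the paper's discrepancy estimate.}
\begin{itemize}
\item Set $g=\sqrt{f+C}$. $L$-smoothness gives $\|\nabla f\|^2\le 2L(f+C)$, and from this $g$ is $L_g$-smooth with $L_g=L/\sqrt{\delta}$.
\item The first SAV equation is exactly $\tilr_{n+1}-r_n=\nabla g(\param_n)^\top(\param_{n+1}-\param_n)$. Hence $\tilr_n-g(\param_n)=r_{n-1}-g(\param_{n-1})-h_n$ with $|h_n|\le\frac{L_g}{2}\|\param_n-\param_{n-1}\|^2$.
\item Combine this with $r_n-g(\param_n)=\xi_{n-1}(\tilr_n-g(\param_n))$, $r_0=g(\param_0)$, and $\sum_k\|\param_k-\param_{k-1}\|^2\le\frac{\stepsize}{1-\eta}r_0^2$. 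This yields $|r_n-g(\param_n)|\le\frac{L_g\stepsize}{2(1-\eta)}(r_0^2-r_n^2)$, which is where the factor $\stepsize$ enters.
\end{itemize}

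The paper then argues additively rather than multiplicatively. Consider the tail where $r_n\le\sqrt{\delta}\le g(\param_n)$ and $\xi_{n-1}>0$, so that $\tilr_n\le r_n$; if $\xi_{n-1}=0$ infinitely often, one gets $r_\ast=\sqrt{\delta}$ directly. Summing $g(\param_{n+1})\le g(\param_n)+(\tilr_{n+1}-r_n)+\frac{L_g}{2}\|\param_{n+1}-\param_n\|^2$ and using $g\ge\sqrt{\delta}$ shows that $r$ can decrease on the tail by at most $g(\param_N)-\sqrt{\delta}$ plus $O(\stepsize)$. Together with the discrepancy bound this gives $r_\ast\ge\sqrt{\delta}-\frac{\stepsize L_g r_0^2}{2(1-\eta)}\ge\frac{\sqrt{\delta}}{2}$ for $\stepsize\le C_1=\frac{\sqrt{\delta}(1-\eta)}{L_g r_0^2}$.

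With the discrepancy estimate in hand, your multiplicative argument could plausibly be rescued by telescoping the descent inequality only over the tail, since $f-f_\ast=O(\stepsize)$ at its start. Without it, the proof does not close.
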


\begin{proof}
Let
\begin{equation}
        g(\param)\coloneqq \sqrt{f(\param)+C}. \notag
\end{equation}
Then the RSAV scheme can be written as
\begin{equation}
 \left\{ \ 
 \begin{aligned}
    &\frac{\tilr_{\indexopt+1}- r_\indexopt}{\stepsize} = \nabla g(\param_\indexopt)^\top \frac{\param_{\indexopt+1}-\param_\indexopt}{\stepsize},\\
    &\frac{\param_{\indexopt+1}-\param_\indexopt }{\stepsize}= -\operator_\indexopt \param_{\indexopt+1}- \frac{\tilr_{\indexopt+1}}{\sqrt{f(\param_\indexopt)+C}}\nabla f(\param_\indexopt)+\operator_\indexopt \param_\indexopt\\
    &r_{\indexopt+1} = \xi_\indexopt \tilde{r}_{\indexopt+1} + (1-\xi_\indexopt) g(\param_{\indexopt+1}).
\end{aligned}
 \right.   
\label{eq:rsav_discrete_scheme}
\end{equation}

The proof proceeds in four steps. 
In Step 1, we show that the sequence $\{r_\indexopt\}_{\indexopt}$ is nonnegative and monotonically decreasing, and therefore convergent. 
In Step 2, we verify that $g$ is $L_g$-smooth with $L_g\coloneqq L/\sqrt{\delta}$. 
In Step 3, we derive an estimate for the discrepancy between $r_\indexopt$ and $g(\param_\indexopt)$. 
Finally, in Step 4, we prove that, for a sufficiently small step size $\stepsize$, 
the limit of $\{r_\indexopt\}_{\indexopt}$ is bounded away from zero.

\paragraph{Step~1}

By the modified dissipation law, the sequence $\{r_\indexopt^2\}_\indexopt$ is monotonically decreasing and bounded below. Therefore, it converges, and there exists 
$r_\ast\geq 0$ 
such that
\begin{equation}
    \lim_{\indexopt\to\infty} r_\indexopt^2 = r_\ast^2 .
    \notag
\end{equation}
Since $\A_\indexopt$ is positive definite, the coefficient appearing in the first equation
of the RSAV scheme~\eqref{eq:rsav_scheme} satisfies
\begin{equation}
        \left(1+\stepsize
    \frac{\nabla f(\param_\indexopt)^\top\bm{A}_\indexopt^{-1}\nabla f(\param_\indexopt)}{2g(\param_\indexopt)^2}
    \right)^{-1}\ge0 . \notag
\end{equation}
Together with $r_0>0$ and the convex combination 
\begin{equation}
        r_{\indexopt+1}
    =\xi_\indexopt\tilr_{\indexopt+1}
    +(1-\xi_\indexopt)g(\param_{\indexopt+1}) \notag
\end{equation}
in the RSAV scheme~\eqref{eq:rsav_scheme}, this implies by induction that
\begin{equation}
        r_\indexopt\ge0,
    \qquad
    \tilr_{\indexopt+1}\ge0
    \quad \text{for all} \quad \indexopt \ge  0. \notag
\end{equation}
Therefore, the sequence $\{r_\indexopt\}_\indexopt$ is also decreasing, and it follows that
\begin{equation}
    \lim_{\indexopt\to\infty} r_\indexopt = r_\ast .
    \notag
\end{equation}

\paragraph{Step~2}
By the $L$-smoothness of $f$, we have
\begin{equation}
    f\left(\param-\frac{1}{L}\nabla f(\param)\right)
    \le
    f(\param)-\frac{1}{L}\|\nabla f(\param)\|^2
    +\frac{1}{2L}\|\nabla f(\param)\|^2 .
    \notag
\end{equation}
Since $f\left(\param-\frac{1}{L}\nabla f(\param)\right)+C>0$, we obtain
\begin{equation}
    \|\nabla f(\param)\|^2
    \le
    2L\left\{f(\param)-f\left(\param-\frac{1}{L}\nabla f(\param)\right)\right\}
    < 2L\{f(\param)+C\}
    =2L\{g(\param)\}^2 .
    \label{grad_f_bound}
\end{equation}
Therefore, it follows that
\begin{equation}
    \|\nabla g(\param)\|
    =
    \frac{\| \nabla f(\param)\|}{2g(\param)} \le \frac{\sqrt{2L}g(\param)}{2 g(\param)} = \sqrt{\frac{L}{2}}
    \notag.
\end{equation}
This implies that $g$ is Lipschitz continuous.
For arbitrary $\param,\bm{\phi}\in\mathbb{R}^d$, we have
\begin{equation}
    \begin{aligned}
        \|\nabla g(\param) - \nabla g(\bm{\phi})\|
        &\le \left\|\frac{\nabla f(\param)- \nabla f(\bm{\phi})}{2 g(\param)} + 
        \frac{\nabla f(\bm{\phi})}{2}\left\{\frac{1}{g(\param)} - \frac{1}{g(\bm{\phi})}\right\} \right\| \\
        &\le \frac{\|\nabla f(\param)- \nabla f(\bm{\phi})\|}{2 g(\param)}+ \frac{\|\nabla f(\bm{\phi})\|}{2} \frac{|g(\bm{\phi})-g(\param)|}{g(\param) g(\bm{\phi})}\\
        &\le \left\{ \frac{L}{2 \sqrt{\delta}} + \frac{\sqrt{2L}g(\bm{\phi})}{2} \frac{\sqrt{L/2}}{\sqrt{\delta}g(\bm{\phi})} \right\}\|\param- \bm{\phi}\|
        \le \frac{L}{\sqrt{\delta}}\|\param- \bm{\phi}\| . \notag
    \end{aligned}
\end{equation}
Hence, $g$ is $L_g$-smooth with $L_g=L/\sqrt{\delta}$.

\paragraph{Step~3}

For convenience, we introduce the notation
\begin{align*}
    & \s_\indexopt \coloneqq \param_\indexopt-\param_{\indexopt-1}, \\
    & h_n
    \coloneqq
    g(\param_\indexopt)-g(\param_{\indexopt-1})
    -
    \nabla g(\param_{\indexopt-1})^\top \s_\indexopt, \\
    & \nu_k\coloneqq \xi_{\indexopt-2}\xi_{\indexopt-3}\cdots\xi_k\in[0,1]
    \quad (k=0,\ldots,\indexopt-2), \quad \nu_{\indexopt-1}=1.
\end{align*}

From the convex combination in the RSAV scheme~\eqref{eq:rsav_scheme}, 
\begin{equation}
    r_\indexopt-g(\param_\indexopt)
    =\xi_{\indexopt-1}\bigl(\tilr_\indexopt-g(\param_\indexopt)\bigr) \quad \text{for all} \quad \indexopt\geq 1.
    \label{conv_r}
\end{equation}
Moreover, from the first equation of
the RSAV scheme~\eqref{eq:rsav_discrete_scheme}, we have
\begin{equation}
    \tilr_\indexopt -g(\param_\indexopt) = r_{\indexopt-1} - g(\param_{\indexopt-1}) - h_\indexopt \quad \text{for all} \quad \indexopt\geq 1.
\label{eq:tilr-g}
\end{equation}
Repeatedly applying~\eqref{conv_r} and~\eqref{eq:tilr-g} yields
\begin{equation}
\begin{aligned}
    \tilr_\indexopt-g(\param_\indexopt)
    &=r_{\indexopt-1}-g(\param_{\indexopt-1})
    -h_\indexopt \\
    &=\xi_{\indexopt-2}\bigl(\tilr_{\indexopt-1}-g(\param_{\indexopt-1})\bigr)
    - h_\indexopt \\
    &=\xi_{\indexopt-2}\left(
    r_{\indexopt-2}-g(\param_{\indexopt-2})
    -h_{\indexopt-1}
    \right) -h_\indexopt\\
    &=\cdots \\
    &=\nu_0(r_0-g(\param_0))
    -\sum_{k=1}^{\indexopt}\nu_{k-1}h_k \\
    &=-\sum_{k=1}^{\indexopt}\nu_{k-1}h_k, 
\end{aligned}
\label{eq:tilde_r-g_val}
\end{equation}
where the last equality follows from the identity $r_0=g(\param_0)$.
Since $g$ is $L_g$-smooth, we have
\begin{equation}
    \begin{aligned}
    |h_k| &= |g(\param_k)- g(\param_{k-1})- \nabla g(\param_{k-1})^\top \s_k| \\
    &= \left|\int_0^1 \nabla g(\param_{k-1}+t \s_k)^\top \s_k  \, dt - \nabla g(\param_{k-1})^\top \s_k\right| \\
    &\le \int_0^1 \left| \left[ \nabla g(\param_{k-1}+t \s_k) - \nabla g(\param_{k-1})\right]^\top \s_k \right| \, dt \\
    &=\int_0^1 \|  \nabla g(\param_{k-1}+t \s_k) - \nabla g(\param_{k-1})\|  \|\s_k\| \, dt \\
    & \le  \int_0^1 L_g \|t \s_k\| \|\s_k\|  \, dt = \frac{L_g}{2}\|\s_k\|^2
         \label{eq:h_val}.
    \end{aligned}
\end{equation}
From the modified dissipation law~\eqref{eq:rsav_modified_dissipation_law}, we see that
\begin{equation}
    G_{k-1} \le \frac{1}{1-\eta}(r_{k-1} ^2-r_{k}^2). \notag
\end{equation}
By the definition of $G_{k-1}$ in~\eqref{eq:cond}, we obtain
\begin{equation}
\begin{aligned}
    \|\s_k \|^2=\|\param_{k}-\param_{k-1}\|^2
    &\le
    \frac{\stepsize}{1-\eta}(r_{k-1}^2-r_{k}^2)
    -\stepsize(\param_{k}-\param_{k-1})^\top\operator_{k-1}(\param_{k}-\param_{k-1}) \\
    &\le
    \frac{\stepsize}{1-\eta}(r_{k-1}^2-r_{k}^2),
\end{aligned}
\label{eq:param_diff}
\end{equation}
where the last inequality follows from the positive semidefiniteness of $\operator_{k-1}$.

Combining~\eqref{eq:tilde_r-g_val} with the estimates~\eqref{eq:h_val} and~\eqref{eq:param_diff}, we obtain
\begin{equation}
\begin{aligned}
    |\tilr_\indexopt-g(\param_\indexopt)|
    &\le
    \sum_{k=1}^{\indexopt}\nu_{k-1}
    |h_k|\\
    &\le
    \frac{L_g}{2}\sum_{k=1}^{\indexopt}\nu_{k-1}\|\s_k\|^2 \le 
    \frac{L_g}{2}\sum_{k=1}^{\indexopt}\|\s_k\|^2  \\
    & \le
    \frac{L_g\stepsize}{2(1-\eta)}
    \sum_{k=1}^{\indexopt}(r_{k-1}^2-r_k^2) 
    =
    \frac{L_g\stepsize}{2(1-\eta)}(r_0^2-r_\indexopt^2) \quad \text{for all} \quad \indexopt \geq 1.
\end{aligned}
\label{eq:tilr_g_eval}
\end{equation}
Consequently, by~\eqref{conv_r} and~\eqref{eq:tilr_g_eval}, it follows that
\begin{equation}
    |r_\indexopt-g(\param_\indexopt)|
    =\xi_{\indexopt-1}|\tilr_\indexopt-g(\param_\indexopt)| \le
    \frac{L_g\stepsize}{2(1-\eta)}(r_0^2-r_\indexopt^2) \quad \text{for all} \quad \indexopt \geq 1. \label{eq:g-r_eval}
\end{equation}

\paragraph{Step~4}
If, for every $N_0\in\mathbb{N}$, there exists $\indexopt\ge N_0$ such that $r_\indexopt >\sqrt{\delta}$, then
\[
    r_\ast\ge\sqrt{\delta},
\]
and hence the desired bound~\eqref{r_positive_lower_bound} follows.
It therefore remains to consider the case in which there exists $N_0\in\mathbb{N}$ such that
\[
    r_n\le\sqrt{\delta}
    \quad \text{for all} \quad \indexopt \ge N_0.
\]
Note that since $g(\param_\indexopt)\ge\sqrt{\delta}$, we then have
\begin{equation}
    r_\indexopt \le g(\param_\indexopt)
    \quad \text{for all} \quad \indexopt\ge N_0.
\end{equation}

We first consider the case where, for every
$N_1\geq N_0$,
there exists $\indexopt \ge N_1$ such that $\xi_{\indexopt-1}=0$. For such an index $\indexopt
$, the third equation of the RSAV scheme~\eqref{eq:rsav_scheme} reduces to
\begin{equation}
        r_\indexopt=g(\param_\indexopt). \notag
\end{equation}
On the other hand, $r_\indexopt\le\sqrt{\delta}$ for $\indexopt \ge N_0$, whereas $g(\param_\indexopt)\ge\sqrt{\delta}$ always holds. 
Hence, for such an index $\indexopt$,
we have
\begin{equation}
        r_\indexopt=g(\param_\indexopt)=\sqrt{\delta}. \notag
\end{equation}
Since such indices can be chosen arbitrarily large and the sequence $\{r_\indexopt\}_\indexopt$ is monotonically decreasing, we obtain
\begin{equation}
        r_\ast=\sqrt{\delta}\ge\frac{\sqrt{\delta}}{2}, \notag
\end{equation}
and the conclusion follows.

Therefore, we may assume that there exists $N_2\in\mathbb{N}$ such that
\begin{equation}
        \xi_{\indexopt-1}>0
    \quad \text{for all} \quad \indexopt\ge N_2. \notag
\end{equation}
In what follows, we set $N\coloneqq\max\{N_0,N_2\}$.

Since the RSAV scheme~\eqref{eq:rsav_scheme} expresses $r_\indexopt$ as a convex combination of $\tilr_\indexopt$ and $g(\param_\indexopt)$, 
it follows from $r_\indexopt\le g(\param_\indexopt)$ and $\xi_{\indexopt-1}>0$ that
\begin{equation}
    0\le \tilr_\indexopt\le r_\indexopt\le g(\param_\indexopt)
    \quad \text{for all} \quad \indexopt\ge N.
    \label{r_ineq}
\end{equation}
By the $L_g$-smoothness of $g$, the RSAV scheme~\eqref{eq:rsav_discrete_scheme},  \eqref{eq:param_diff} and~\eqref{r_ineq}, we obtain
\begin{equation}
\begin{aligned}
    g(\param_{\indexopt+1})
    &\le
    g(\param_\indexopt)
    +\nabla g(\param_\indexopt)^\top(\param_{\indexopt+1}-\param_\indexopt)
    +\frac{L_g}{2}\|\param_{\indexopt+1}-\param_\indexopt\|^2 \\
    &=
    g(\param_\indexopt)+(\tilr_{\indexopt+1}-r_\indexopt)
    +\frac{L_g}{2}\|\param_{\indexopt+1}-\param_\indexopt\|^2 \\
    &\le
    g(\param_\indexopt)+(r_{\indexopt+1}-r_\indexopt)
    +\frac{\stepsize L_g}{2(1-\eta)}(r_\indexopt^2-r_{\indexopt+1}^2)
\end{aligned}
\label{eq:g_smooth}
\end{equation}
for all $\indexopt\ge N$.
Summing~\eqref{eq:g_smooth} from $n=N$ to $K$ yields
\begin{equation}
    g(\param_{K+1})
    \le
    g(\param_N)+(r_{K+1}-r_N)
    +\frac{\stepsize L_g}{2(1-\eta)}(r_N^2-r_{K+1}^2).
    \notag
\end{equation}
Since $g(\param_{K+1})\ge\sqrt{\delta}$, we get
\begin{equation}
    \sqrt{\delta}
    \le
    g(\param_N)+(r_{K+1}-r_N)
    +\frac{\stepsize L_g}{2(1-\eta)}(r_N^2-r_{K+1}^2).
    \notag
\end{equation}
Letting $K\to\infty$ gives
\begin{equation}
    \sqrt{\delta}
    \le
    g(\param_N)+(r_\ast-r_N)
    +\frac{\stepsize L_g}{2(1-\eta)}(r_N^2-r_\ast^2). \notag
\end{equation}
Rearranging yields
\begin{equation}
\begin{aligned}
    r_\ast
    &\ge
    \sqrt{\delta}+(r_N-g(\param_N))
    -\frac{\stepsize L_g}{2(1-\eta)}r_N^2
    +\frac{\stepsize L_g}{2(1-\eta)}r_\ast^2 \\
    &\ge
    \sqrt{\delta}+(r_N-g(\param_N))
    -\frac{\stepsize L_g}{2(1-\eta)}r_N^2 .
\end{aligned}
\label{eq:lower_bound_pre}
\end{equation}

Define
\begin{equation}
    C_1\coloneqq \frac{\sqrt{\delta}(1-\eta)}{L_g r_0^2}>0 .
    \notag
\end{equation}
If $\stepsize\le C_1$, then~\eqref{eq:lower_bound_pre}, together with~\eqref{eq:g-r_eval},
implies that
\begin{equation}
\begin{aligned}
    r_\ast
    &\ge
    \sqrt{\delta}+(r_N-g(\param_N))
    -\frac{\stepsize L_g}{2(1-\eta)}r_N^2 \\
    &\ge
    \sqrt{\delta}
    -\frac{\stepsize L_g}{2(1-\eta)}(r_0^2-r_N^2)
    -\frac{\stepsize L_g}{2(1-\eta)}r_N^2 \\
    &=
    \sqrt{\delta}
    -\stepsize\frac{L_g r_0^2}{2(1-\eta)}
    \ge \frac{\sqrt{\delta}}{2}.
\end{aligned}
\notag
\end{equation}
This proves~\eqref{r_positive_lower_bound}.
\end{proof}

The following theorem summarizes the convergence properties of the RSAV scheme.

\begin{theorem}
\label{thm:rsav_convergence_analysis}
Assume that $f$ is $L$-smooth. In addition, suppose that the positive semidefinite linear operator $\operator_\indexopt$ is uniformly bounded above in the Loewner order; namely, there exists a constant $M>0$ such that
\begin{equation}
    \bm{0}\preceq \operator_\indexopt \preceq M\I^{(d)} .
    \label{operator_assumption}
\end{equation}
Let $C_1>0$ be the constant defined in Lemma~\ref{thm:positive_lower_bound}. Suppose that the step size $\stepsize$ satisfies
\begin{equation}
    \stepsize
    \le
    \min\left\{C_1,\frac{\sqrt{\delta}}{L\sqrt{f(\param_0)+C}}\right\} .
    \label{stepsize_assumption}
\end{equation}
Let 
\begin{equation*}
    \{(\param_\indexopt,r_\indexopt,\tilr_{\indexopt+1})\}_{\indexopt}
\end{equation*}
be the sequence generated by the RSAV scheme.
Then the following statements hold.

\begin{enumerate}
\item[(i)] The objective function values are monotonically decreasing:
\begin{equation}
    f(\param_{\indexopt+1}) \le f(\param_\indexopt) .
    \label{f_dcreasing}
\end{equation}
Moreover, 
\begin{equation}
    \lim_{\indexopt\to\infty}\|\nabla f(\param_\indexopt)\|=0 . \notag
\end{equation}

\item[(ii)] If $f$ satisfies the PL condition, then the sequence $\{\param_\indexopt\}_\indexopt$ converges to a global minimizer $\param_\infty$:
\begin{equation}
    \lim_{\indexopt\to\infty}\param_\indexopt=\param_\infty,
    \qquad
    f(\param_\infty)=f_\ast . \notag
\end{equation}

\item[(iii)] If $f$ satisfies the PL condition, then 
the function values converge linearly in the sense that
\begin{equation}
    f(\param_{\indexopt+1})-f_\ast
    \le q\bigl(f(\param_\indexopt)-f_\ast\bigr), \notag
\end{equation}
where
\begin{equation}
    0<q\coloneqq
    1-\frac{\mu\tilr_\ast\stepsize}{(1+\stepsize M)\sqrt{f(\param_0)+C}}
    <1 . \notag
\end{equation}
\end{enumerate}
\end{theorem}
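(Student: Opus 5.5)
The plan is to work directly with the original energy $f$, using the explicit form of the RSAV update in~\eqref{eq:rsav_scheme}. We write $\s_{\indexopt+1}=\param_{\indexopt+1}-\param_\indexopt=-\stepsize\,\frac{\tilr_{\indexopt+1}}{g(\param_\indexopt)}\A_\indexopt^{-1}\nabla f(\param_\indexopt)$, with $g=\sqrt{f+C}$ as in the proof of Lemma~\ref{thm:positive_lower_bound}. The $L$-smoothness inequality then gives
\[
f(\param_{\indexopt+1})\le f(\param_\indexopt)-\stepsize\frac{\tilr_{\indexopt+1}}{g(\param_\indexopt)}\nabla f^\top\A_\indexopt^{-1}\nabla f+\frac{L}{2}\stepsize^2\frac{\tilr_{\indexopt+1}^2}{g(\param_\indexopt)^2}\|\A_\indexopt^{-1}\nabla f\|^2 ,
\]
where $\nabla f=\nabla f(\param_\indexopt)$.

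\textbf{Descent inequality.} From \eqref{operator_assumption} we have $\I^{(d)}\preceq\A_\indexopt\preceq(1+\stepsize M)\I^{(d)}$. This gives two bounds:
\[
\|\A_\indexopt^{-1}\nabla f\|^2\le\nabla f^\top\A_\indexopt^{-1}\nabla f,
\qquad
\nabla f^\top\A_\indexopt^{-1}\nabla f\ge\frac{\|\nabla f\|^2}{1+\stepsize M}.
\]
The first one lets the quadratic term be absorbed by the linear term as soon as $\stepsize L\,\tilr_{\indexopt+1}/g(\param_\indexopt)\le 1$. For this we use Step~1 of the lemma, $\tilr_{\indexopt+1}\le r_\indexopt\le r_0=\sqrt{f(\param_0)+C}$, together with $g(\param_\indexopt)\ge\sqrt\delta$. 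Then the second bound in \eqref{stepsize_assumption} gives $\stepsize L\tilr_{\indexopt+1}/g(\param_\indexopt)\le1$. We obtain
\[
f(\param_{\indexopt+1})\le f(\param_\indexopt)-\frac{\stepsize\,\tilr_{\indexopt+1}}{2g(\param_\indexopt)(1+\stepsize M)}\|\nabla f(\param_\indexopt)\|^2 ,
\]
which yields \eqref{f_dcreasing}. It also gives $g(\param_\indexopt)\le\sqrt{f(\param_0)+C}$ for all $\indexopt$.

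\textbf{Uniform lower bound on $\tilr_{\indexopt+1}$.} This is the step I expect to be the main obstacle, because the descent coefficient must not degenerate. By \eqref{grad_f_bound} and $\A_\indexopt^{-1}\preceq\I^{(d)}$ we have $\nabla f^\top\A_\indexopt^{-1}\nabla f\le 2L g(\param_\indexopt)^2$. So the multiplier in the $\tilr$-update is at least $(1+\stepsize L)^{-1}$. Since $r_\indexopt\ge r_\ast\ge\sqrt\delta/2$ by Lemma~\ref{thm:positive_lower_bound} (this is where $\stepsize\le C_1$ is used), we get $\tilr_{\indexopt+1}\ge\tilr_\ast>0$. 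Here $\tilr_\ast$ is understood as $\inf_\indexopt\tilr_{\indexopt+1}$, or equivalently as the positive lower bound $r_\ast/(1+\stepsize L)$; I will fix whichever convention makes the stated $q$ correct. Telescoping the descent inequality then gives $\sum_\indexopt\|\nabla f(\param_\indexopt)\|^2<\infty$, hence $\|\nabla f(\param_\indexopt)\|\to0$, completing (i).

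\textbf{Parts (iii) and (ii).} For (iii), we insert the PL inequality $\|\nabla f\|^2\ge2\mu(f-f_\ast)$ and the bounds $g(\param_\indexopt)\le\sqrt{f(\param_0)+C}$, $\tilr_{\indexopt+1}\ge\tilr_\ast$ into the descent inequality. This gives the contraction with factor $q$. To check $q>0$, we use $\mu\le L$ (PL combined with smoothness), $\tilr_\ast\le r_0$, and $\stepsize L\le\sqrt\delta/\sqrt{f(\param_0)+C}\le1$; these give $q\ge0$, and strict positivity follows with slight care or is trivial if $q=0$ is excluded by strictness of one of these inequalities. For (ii), we bound $\|\s_{\indexopt+1}\|\le\stepsize\frac{r_0}{\sqrt\delta}\|\nabla f(\param_\indexopt)\|\le\stepsize\frac{r_0}{\sqrt\delta}\sqrt{2L(f(\param_\indexopt)-f_\ast)}$. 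By (iii) this is $O(q^{\indexopt/2})$, hence summable, so $\{\param_\indexopt\}$ is Cauchy and converges to some $\param_\infty$. Continuity and (iii) then give $f(\param_\infty)=f_\ast$.
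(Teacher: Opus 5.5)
Your proof is correct. The descent inequality and part (iii) coincide with the paper's proof; the differences are in the lower bound for $\tilr_{\indexopt+1}$ and in part (ii).

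For the lower bound, you use the scale-free estimate $\|\nabla f(\param)\|^2<2L\,g(\param)^2$ from \eqref{grad_f_bound}. This bounds the multiplier below by $(1+\stepsize L)^{-1}$ without using monotonicity of $f$, and gives $\tilr_\ast=r_\ast/(1+\stepsize L)$. The paper instead uses $\|\nabla f(\param_\indexopt)\|^2\le U=2L(f(\param_0)-f_\ast)$, which relies on the monotone decrease, together with $g(\param_\indexopt)^2\ge\delta$. This gives $\tilr_\ast=(1+\stepsize U/(2\delta))^{-1}r_\ast$. Since $\tilr_\ast$ is only defined inside the proof, any positive lower bound on $\tilr_{\indexopt+1}$ not exceeding $r_0$ proves the theorem, so your choice is legitimate, and neither constant dominates the other. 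Your word ``equivalently'' is inaccurate, though: $\inf_\indexopt\tilr_{\indexopt+1}$ and $r_\ast/(1+\stepsize L)$ differ in general, even though both conventions work.

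For (ii), you obtain summability of $\|\param_{\indexopt+1}-\param_\indexopt\|\le\stepsize(r_0/\sqrt{\delta})\sqrt{2L(f(\param_\indexopt)-f_\ast)}$ from the geometric decay in (iii), and you identify the limit by continuity of $f$. The paper proves finite length directly by a \L{}ojasiewicz-type telescoping of $\sqrt{w_\indexopt}-\sqrt{w_{\indexopt+1}}$, with $w_\indexopt=f(\param_\indexopt)-f_\ast$. It combines $w_\indexopt-w_{\indexopt+1}\ge\|\param_{\indexopt+1}-\param_\indexopt\|^2/(2\alpha_\indexopt)$, where $\alpha_\indexopt=\stepsize\tilr_{\indexopt+1}/g(\param_\indexopt)$, with the PL inequality rewritten through $\A_\indexopt(\param_{\indexopt+1}-\param_\indexopt)$. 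It then uses $\nabla f(\param_\indexopt)\to\bm{0}$ to show the limit is stationary. Your route is shorter once (iii) is available. The paper's route does not depend on the linear rate, and it is the argument that carries over to KL-type settings where no rate is known.

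Finally, strict positivity of $q$ needs no ``slight care''. Your three facts give $\mu\tilr_\ast\stepsize/\sqrt{f(\param_0)+C}\le\mu\stepsize\le L\stepsize\le1$. Hence the subtracted term in $q$ is at most $1/(1+\stepsize M)<1$, because $\stepsize M>0$; this is exactly how the paper concludes.
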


\begin{proof}
In the proof, we define 
\begin{equation}
    \alpha_\indexopt
    \coloneqq
   \stepsize \frac{\tilr_{\indexopt+1}}{\sqrt{f(\param_\indexopt)+C}}, \notag
\end{equation}
which can be interpreted as an effective step size for the gradient flow $\dot{\param} = - \A_\indexopt^{-1} \nabla f (\param)$. 
With this notation, the RSAV scheme \eqref{eq:rsav_scheme} can be written as
\begin{equation}
\left\{
\begin{aligned}
    &\tilde{r}_{\indexopt+1}
    =
    \left(1+
    \stepsize\frac{\|\nabla f(\param_\indexopt)\|_{\A_\indexopt^{-1}}^2}{2(f(\param_\indexopt)+C)}
    \right)^{-1}r_\indexopt, \\
    &\param_{\indexopt+1}
    =\param_\indexopt-
    \alpha_\indexopt\A_\indexopt^{-1}\nabla f(\param_\indexopt), \\
    &r_{\indexopt+1}
    =\xi_\indexopt\tilde{r}_{\indexopt+1}
    +(1-\xi_\indexopt)\sqrt{f(\param_{\indexopt+1})+C}.
\end{aligned}
\right.
\label{relaxed_sav_scheme_for_proof}
\end{equation}
Since the assumption \eqref{stepsize_assumption} implies $\stepsize\le C_1$, Lemma~\ref{thm:positive_lower_bound} ensures that $r_\indexopt>0$ for all $\indexopt$. 
Therefore, the first equation of \eqref{relaxed_sav_scheme_for_proof} implies that $\tilr_{\indexopt+1}>0$, and hence $\alpha_\indexopt>0$.

\paragraph{Proof of (i)}
By the $L$-smoothness of $f$, we have
\begin{equation}
    f(\param_{\indexopt+1})
    \le
    f(\param_\indexopt)
    +\nabla f(\param_\indexopt)^\top(\param_{\indexopt+1}-\param_\indexopt)
    +\frac{L}{2}\|\param_{\indexopt+1}-\param_\indexopt\|^2 .
    \notag
\end{equation}
Using the second equation of \eqref{relaxed_sav_scheme_for_proof}, we obtain
\begin{equation}
    f(\param_{\indexopt+1})
    \le
    f(\param_\indexopt)
    -\alpha_\indexopt\nabla f(\param_\indexopt)^\top\A_\indexopt^{-1}\nabla f(\param_\indexopt)
    +\frac{L}{2}\alpha_\indexopt^2
    \|\A_\indexopt^{-1}\nabla f(\param_\indexopt)\|^2 .
    \label{f_L_smooth_theorem_3_i_pre}
\end{equation}
Since $\A_\indexopt=\I^{(d)}+\stepsize\operator_\indexopt$, the largest eigenvalue of $\A_\indexopt^{-1}$ is at most one. 
Hence,
\begin{equation}
    \|\A_\indexopt^{-1}\nabla f(\param_\indexopt)\|^2
    =
    \nabla f(\param_\indexopt)^\top(\A_\indexopt^{-1})^2\nabla f(\param_\indexopt) 
    \le
    \nabla f(\param_\indexopt)^\top\A_\indexopt^{-1}\nabla f(\param_\indexopt).
\notag
\end{equation}
Substituting this estimate into \eqref{f_L_smooth_theorem_3_i_pre} gives
\begin{equation}
    f(\param_{\indexopt+1})
    \le
    f(\param_\indexopt)
    -\alpha_\indexopt
    \left(1-\frac{\alpha_\indexopt L}{2}\right)
    \|\nabla f(\param_\indexopt)\|_{\A_\indexopt^{-1}}^2 .
    \label{Lsmooth_eq}
\end{equation}
By $\tilr_{\indexopt+1}\le r_\indexopt\le r_0$,   $\sqrt{f(\param_0)+C}\ge\sqrt{\delta}$ and \eqref{stepsize_assumption}, 
we have
\begin{equation}
    1-\frac{\alpha_\indexopt L}{2}
    =
    1-\frac{\tilr_{\indexopt+1}L}{2\sqrt{f(\param_\indexopt)+C}}\stepsize 
    \ge
    1-\frac{r_0L}{2\sqrt{\delta}}\stepsize 
    \ge \frac{1}{2}>0.
\label{eq:alpha_f_monotonic}
\end{equation}
From \eqref{Lsmooth_eq} and \eqref{eq:alpha_f_monotonic}, we conclude that $f(\param_{\indexopt+1})
    \le
    f(\param_\indexopt)$.

We next show that $\|\nabla f(\param_\indexopt)\|\to0$. 
To this end, we first establish a positive lower bound for $\tilr_{\indexopt+1}$.

By the $L$-smoothness of $f$,
\begin{equation}
    f(\bm{\phi})
    \le
    f(\param)+\nabla f(\param)^\top(\bm{\phi}-\param)
    +\frac{L}{2}\|\bm{\phi}-\param\|^2
    \quad \text{for all } \param,\bm{\phi}\in\R^d.
    \notag
\end{equation}
Setting $\bm{\phi}=\param_\indexopt-(1/L)\nabla f(\param_\indexopt)$ and $\param=\param_\indexopt$ yields
\begin{equation}
    \|\nabla f(\param_\indexopt)\|^2
    \le
    2L\left\{
    f(\param_\indexopt)
    -f\left(\param_\indexopt-\frac{1}{L}\nabla f(\param_\indexopt)\right)
    \right\}.
    \label{eq:nabla_f_le_2L}
\end{equation}
Let
\[
    U\coloneqq 2L\bigl(f(\param_0)-f_\ast\bigr)\ge0.
\]
Since $f(\param_\indexopt)\le f(\param_0)$ by the monotonicity shown above, the inequality \eqref{eq:nabla_f_le_2L} implies that
\begin{equation}
    \|\nabla f(\param_\indexopt)\|^2\le U .
    \label{grad_norm_upper_bound}
\end{equation}
Moreover, by the assumption \eqref{operator_assumption}, it follows that
\begin{equation}
    \frac{1}{1+M\stepsize}\|\nabla f(\param_\indexopt)\|^2
    \le
    \|\nabla f(\param_\indexopt)\|_{\A_\indexopt^{-1}}^2
    \le
    \|\nabla f(\param_\indexopt)\|^2 .
    \label{grad_val_from_assumption}
\end{equation}
Since Lemma~\ref{thm:positive_lower_bound} applies under \eqref{stepsize_assumption}, we have $r_\indexopt\ge r_\ast>0$. 
Combining this with \eqref{grad_norm_upper_bound} and \eqref{grad_val_from_assumption}, the first equation of \eqref{relaxed_sav_scheme_for_proof} yields
\begin{align*}
    \tilr_{\indexopt+1}
    & =
    \left(1+\stepsize
    \frac{\|\nabla f(\param_\indexopt)\|_{\A_\indexopt^{-1}}^2}{2(f(\param_\indexopt)+C)}
    \right)^{-1}r_\indexopt \\
    & \ge
    \left(1+\stepsize
    \frac{\|\nabla f(\param_\indexopt)\|^2}{2\delta}
    \right)^{-1}r_\ast 
    \ge
    \left(1+\stepsize\frac{U}{2\delta}\right)^{-1}r_\ast .
\end{align*}
Therefore, defining
\[
    \tilr_\ast\coloneqq
    \left(1+\stepsize\frac{U}{2\delta}\right)^{-1}r_\ast>0,
\]
we see that
\begin{equation}
    \tilr_{\indexopt+1}\ge \tilr_\ast>0
    \quad \text{for all} \quad \indexopt \geq 0.
    \label{tilr_positive_lower_bound}
\end{equation}

From \eqref{eq:alpha_f_monotonic} and \eqref{Lsmooth_eq}, we obtain
\begin{equation}
    \frac{\alpha_\indexopt}{2}
    \|\nabla f(\param_\indexopt)\|_{\A_\indexopt^{-1}}^2 \leq
    \left(1-\frac{\alpha_\indexopt L}{2}\right)
    \|\nabla f(\param_\indexopt)\|_{\A_\indexopt^{-1}}^2
    \le
    f(\param_\indexopt)-f(\param_{\indexopt+1}).
    \label{eq:L_smooth_eq_for_grad_zero}
\end{equation}
The monotonicity of $f$ and \eqref{tilr_positive_lower_bound} imply that
\begin{equation}
    \alpha_\indexopt
    =
    \frac{\tilr_{\indexopt+1}}{\sqrt{f(\param_\indexopt)+C}}\stepsize
    \ge
    \frac{\tilr_\ast}{\sqrt{f(\param_0)+C}}\stepsize .
    \label{alpha_val}
\end{equation}
Using \eqref{eq:L_smooth_eq_for_grad_zero}, \eqref{alpha_val} and \eqref{grad_val_from_assumption}, we obtain
\begin{align*}
    f(\param_\indexopt)-f(\param_{\indexopt+1})
    &\ge \frac{\alpha_\indexopt}{2}\|\nabla f(\param_\indexopt)\|^2_{\A_\indexopt^{-1}} \\
    &\ge \frac{\tilde{r}_\ast \stepsize}{2 \sqrt{f(\param_0)+C}} \|\nabla f(\param_\indexopt)\|^2_{\A_\indexopt^{-1}}\\
   & \ge \frac{\tilr_\ast\stepsize}{2(1+M\stepsize)\sqrt{f(\param_0)+C}}
    \|\nabla f(\param_\indexopt)\|^2.
\end{align*}
Summing this inequality from $n=0$ to $N$ gives
\begin{equation}
\begin{aligned}
    \sum_{n=0}^{N}\|\nabla f(\param_\indexopt)\|^2
    &\le
    \frac{2(1+M\stepsize)\sqrt{f(\param_0)+C}}{\tilr_\ast\stepsize}
    \bigl(f(\param_0)-f(\param_{N+1})\bigr) \\
    &\le
    \frac{2(1+M\stepsize)\sqrt{f(\param_0)+C}}{\tilr_\ast\stepsize}
    \bigl(f(\param_0)-f_\ast\bigr).
\end{aligned}
\notag
\end{equation}
Letting $N\to\infty$ yields
\begin{equation}
    \sum_{n=0}^{\infty}\|\nabla f(\param_\indexopt)\|^2<\infty .
    \notag
\end{equation}
Therefore, $\|\nabla f(\param_\indexopt)\|\to0$.

\paragraph{Proof of (ii)}
By \eqref{eq:alpha_f_monotonic}, we have $\alpha_\indexopt\le1/L$. Hence, by the $L$-smoothness of $f$,
\begin{equation}
\begin{aligned}
    f(\param_{\indexopt+1})-f(\param_\indexopt)
    &\le
    \nabla f(\param_\indexopt)^\top(\param_{\indexopt+1}-\param_\indexopt)
    +\frac{L}{2}\|\param_{\indexopt+1}-\param_\indexopt\|^2 \\
    &=-\frac{1}{\alpha_\indexopt}\|\param_{\indexopt+1}-\param_\indexopt\|_{\A_\indexopt}^2
    +\frac{L}{2}\|\param_{\indexopt+1}-\param_\indexopt\|^2 \\
    &\le
    -\frac{1}{\alpha_\indexopt}\|\param_{\indexopt+1}-\param_\indexopt\|^2
    +\frac{L}{2}\|\param_{\indexopt+1}-\param_\indexopt\|^2 \\
    &\le
    -\frac{1}{\alpha_\indexopt}\|\param_{\indexopt+1}-\param_\indexopt\|^2
    +\frac{1}{2\alpha_\indexopt}\|\param_{\indexopt+1}-\param_\indexopt\|^2 \\
    &=-\frac{1}{2\alpha_\indexopt}\|\param_{\indexopt+1}-\param_\indexopt\|^2 .
\end{aligned}
\notag
\end{equation}
Define
\[
    w_\indexopt\coloneqq f(\param_\indexopt)-f_\ast\ge0.
\]
Then the above inequality can be written as
\begin{equation}
    w_\indexopt-w_{\indexopt+1}
    \ge
    \frac{1}{2\alpha_\indexopt}\|\paramdiff\|^2 .
    \label{eq:L_smooth_conv}
\end{equation}

Since $f$ satisfies the PL condition,
\begin{equation}
    \frac{1}{2}\|\nabla f(\param_\indexopt)\|^2
    \ge \mu w_\indexopt .
    \notag
\end{equation}
Using the second equation of \eqref{relaxed_sav_scheme_for_proof}, this can be written as
\begin{equation}
    \frac{1}{2\alpha_\indexopt^2}
    \|\A_\indexopt(\paramdiff)\|^2
    \ge \mu w_\indexopt .
    \label{eq:pl_conv_1}
\end{equation}
Moreover, by \eqref{operator_assumption},
\begin{equation}
    \|\A_\indexopt(\paramdiff)\|
    \le \|\A_\indexopt\|\,\|\paramdiff\| 
    \le (1+\stepsize M)\|\paramdiff\| .
\notag
\end{equation}
Combining this estimate with \eqref{eq:pl_conv_1} yields
\begin{equation}
    \frac{(1+\stepsize M)^2}{2\alpha_\indexopt^2}
    \|\paramdiff\|^2
    \ge \mu w_\indexopt .
    \notag
\end{equation}
Without loss of generality, we may assume that $w_\indexopt>0$ for all $\indexopt$, since otherwise the conclusion is immediate. Therefore,
\begin{equation}
    \frac{1}{\sqrt{w_\indexopt}}
    \ge
    \frac{\sqrt{2\mu}\,\alpha_\indexopt}{(1+\stepsize M)\|\paramdiff\|} .
    \label{eq:pl_conv_2}
\end{equation}
By the monotonicity of $f$ proved in part (i), we have $w_\indexopt\ge w_{\indexopt+1}$. Therefore, 
it follows that
\begin{align*}
    \sqrt{w_\indexopt}-\sqrt{w_{\indexopt+1}}
    &=
    \frac{w_\indexopt-w_{\indexopt+1}}{\sqrt{w_\indexopt}+\sqrt{w_{\indexopt+1}}} 
    \ge
    \frac{w_\indexopt-w_{\indexopt+1}}{2\sqrt{w_\indexopt}} \\
    &\ge
    \frac{1}{4\alpha_\indexopt\sqrt{w_\indexopt}}\|\paramdiff\|^2 
    \ge
    \frac{\sqrt{2\mu}}{4(1+\stepsize M)}\|\paramdiff\| ,
\end{align*}
where we used \eqref{eq:L_smooth_conv} and \eqref{eq:pl_conv_2} in the last two inequalities.
Equivalently,
\begin{equation}
    \|\paramdiff\|
    \le
    \frac{4(1+\stepsize M)}{\sqrt{2\mu}}
    \left(\sqrt{w_\indexopt}-\sqrt{w_{\indexopt+1}}\right).
    \notag
\end{equation}
Summing this inequality from $n=0$ to $N$ yields
\begin{equation}
    \sum_{n=0}^{N}\|\paramdiff\|
    \le
    \frac{4(1+\stepsize M)}{\sqrt{2\mu}}
    \left(\sqrt{w_0}-\sqrt{w_{N+1}}\right)
    \le
    \frac{4(1+\stepsize M)}{\sqrt{2\mu}}\sqrt{w_0} .
    \notag
\end{equation}
Letting $N\to\infty$, we obtain
\begin{equation}
    \sum_{n=0}^{\infty}\|\paramdiff\|<\infty .
    \label{eq:sum_bound}
\end{equation}
Therefore, $\{\param_\indexopt\}_\indexopt$ is a Cauchy sequence.
Since $\R^d$ is complete,
there exists $\param_\infty\in\R^d$ such that $\lim_{\indexopt\to\infty}\param_\indexopt=\param_\infty$.
By part (i), we have
$
\|\nabla f(\param_\indexopt)\| \to 0
$ as $\indexopt \to \infty$.
Since $\nabla f$ is continuous, this implies that
$\nabla f(\param_\infty)=\bm{0}$.
Hence, $\param_\infty$ is a stationary point. Under the PL condition, every stationary point is a global minimizer, which completes the proof.

\paragraph{Proof of (iii)}
By the $L$-smoothness of $f$, \eqref{Lsmooth_eq} and $\alpha_\indexopt\le1/L$ established in \eqref{eq:alpha_f_monotonic}, we have
\begin{equation}
    w_{\indexopt+1}-w_\indexopt
    \le
    -\frac{\alpha_\indexopt}{2}
    \|\nabla f(\param_\indexopt)\|_{\A_\indexopt^{-1}}^2.
    \notag
\end{equation}
By \eqref{grad_val_from_assumption}, we see that
\begin{equation}
    w_{\indexopt+1}-w_\indexopt
    \le
    -\frac{\alpha_\indexopt}{2(1+\stepsize M)}
    \|\nabla f(\param_\indexopt)\|^2 .
    \notag
\end{equation}
Applying the PL condition and \eqref{alpha_val} yields
\begin{equation*}
    w_{\indexopt+1}-w_\indexopt
    \le
    -\frac{\mu\alpha_\indexopt}{1+\stepsize M}w_\indexopt 
    \le
    -\frac{\mu\tilr_\ast\stepsize}{(1+\stepsize M)\sqrt{f(\param_0)+C}}
    w_\indexopt .
\end{equation*}
Therefore,
\begin{equation}
    w_{\indexopt+1}
    \le
    \left(
    1-\frac{\mu\tilr_\ast\stepsize}{(1+\stepsize M)\sqrt{f(\param_0)+C}}
    \right)w_\indexopt .
    \notag
\end{equation}
Define
\begin{equation}
    q\coloneqq
    1-\frac{\mu\tilr_\ast\stepsize}{(1+\stepsize M)\sqrt{f(\param_0)+C}} .
    \notag
\end{equation}
To show that $0<q<1$, it suffices to verify that
\begin{equation}
    \frac{\mu\tilr_\ast\stepsize}{(1+\stepsize M)\sqrt{f(\param_0)+C}}<1 .
    \label{eq:converge_rate_condition}
\end{equation}

A standard consequence of $L$-smoothness is
\begin{equation}
    \|\nabla f(\param)\|^2
    \le
    2L\bigl(f(\param)-f_\ast\bigr).
    \notag
\end{equation}
Combining this estimate with the PL condition yields $\mu\le L$. 
Moreover, by definition, $\tilde{r}_* \leq r_* \leq r_0 = \sqrt{f(\param_0)+C}$.
Therefore, 
if the step size satisfies
\[
    \stepsize\le
    \frac{\sqrt{\delta}}{L\sqrt{f(\param_0)+C}} \leq \frac{1}{L},
\]
the condition \eqref{eq:converge_rate_condition} is satisfied as follows:
\begin{equation}
    \frac{\mu\tilr_\ast\stepsize}{(1+\stepsize M)\sqrt{f(\param_0)+C}}\leq \frac{1}{1+\stepsize M} < 1 .
\end{equation}
This proves the claimed linear convergence rate.

\end{proof}

\subsection{Result for the Proposed Method (N-RSAV)}

The following lemma 
states a fundamental property of the Nystr\"om approximation.
\begin{lemma}[\cite{nystrom_lemma}]
\label{lem:nystrom_order}
Let $\bm{X}\in\R^{d\times d}$ be a positive semidefinite matrix, and let $\tilde{\bm{X}}\in\R^{d\times d}$ be its Nystr\"om approximation. Then
\begin{equation}
    \bm{0}\preceq \tilde{\bm{X}} \preceq \bm{X}. \notag
\end{equation}
\end{lemma}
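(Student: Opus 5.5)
Write the Nyström approximation with sample index set $\mathcal{M}$, $|\mathcal{M}|=m$. Let $\bm{S}\in\R^{d\times m}$ be the column-selection matrix $\bm{S}=(\bm{e}_{\ell_1},\dots,\bm{e}_{\ell_m})$. Then $\bm{V}=\bm{X}\bm{S}$, $\bm{W}=\bm{S}^\top\bm{X}\bm{S}$ and $\tilde{\bm{X}}=\bm{X}\bm{S}(\bm{S}^\top\bm{X}\bm{S})^{\dagger}\bm{S}^\top\bm{X}$. Since $\bm{X}\succeq\bm{0}$, the matrix $\bm{W}$ is positive semidefinite. The eigenvalue truncation in the construction therefore does nothing, and $\hat{\W}=\W$, so it suffices to treat the plain Nyström formula. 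The plan is to use the square root $\bm{X}^{1/2}$ and rewrite $\tilde{\bm{X}}$ as a congruence of an orthogonal projector:
\begin{equation*}
\tilde{\bm{X}}=\bm{X}^{1/2}\bm{P}\bm{X}^{1/2},\qquad \bm{P}\coloneqq \bm{Y}(\bm{Y}^\top\bm{Y})^{\dagger}\bm{Y}^\top,\quad \bm{Y}\coloneqq\bm{X}^{1/2}\bm{S}.
\end{equation*}
This holds because $\bm{X}\bm{S}=\bm{X}^{1/2}\bm{Y}$ and $\bm{S}^\top\bm{X}\bm{S}=\bm{Y}^\top\bm{Y}$.

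The key step is that $\bm{P}$ is the orthogonal projector onto $\operatorname{range}(\bm{Y})$. To see this, use the standard pseudoinverse identity $\bm{Y}(\bm{Y}^\top\bm{Y})^{\dagger}\bm{Y}^\top=\bm{Y}\bm{Y}^{\dagger}$. This identity can be checked with a thin SVD $\bm{Y}=\bm{Q}\bm{D}\bm{R}^\top$ restricted to the nonzero singular values, which gives $\bm{Y}(\bm{Y}^\top\bm{Y})^\dagger\bm{Y}^\top=\bm{Q}\bm{Q}^\top$. Hence $\bm{P}$ is symmetric and idempotent, and $\bm{0}\preceq\bm{P}\preceq\I^{(d)}$.

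The Loewner order is preserved under congruence: for any $\bm{v}\in\R^d$, set $\bm{u}=\bm{X}^{1/2}\bm{v}$. Then
\begin{equation*}
0\le \bm{u}^\top\bm{P}\bm{u}=\bm{v}^\top\tilde{\bm{X}}\bm{v}\le \bm{u}^\top\bm{u}=\bm{v}^\top\bm{X}\bm{v}.
\end{equation*}
This gives $\bm{0}\preceq\tilde{\bm{X}}\preceq\bm{X}$.

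The only delicate point is the pseudoinverse bookkeeping when $\bm{W}$ is singular, i.e.\ when the sampled columns are rank-deficient. This is exactly why we reduce to the SVD form $\bm{Q}\bm{Q}^\top$ rather than manipulating $(\bm{S}^\top\bm{X}\bm{S})^{-1}$ directly. The argument never uses how $\mathcal{M}$ was chosen, so the same proof covers any sketch matrix $\bm{S}$, including the randomized Gaussian version.
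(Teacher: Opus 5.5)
Your proof is correct. The representation $\tilde{\bm{X}}=\bm{X}^{1/2}\bm{P}\bm{X}^{1/2}$, with $\bm{P}$ the orthogonal projector onto $\operatorname{range}(\bm{X}^{1/2}\bm{S})$, followed by the congruence argument, is the standard proof of this fact; the paper gives no proof of its own and only cites the reference. Your remark that the eigenvalue truncation does nothing when $\bm{X}\succeq\bm{0}$ is a useful addition, because it is what justifies applying the lemma to the truncated $\tilde{\bm{H}}_\indexopt$ in the corollary.
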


We now obtain the following convergence result for the proposed N-RSAV method.

\begin{corollary}
\label{col:proposed_converge_convex}
Assume that $f$ is convex, $L$-smooth, and twice differentiable. Let $C_1>0$ be the constant in Lemma~\ref{thm:positive_lower_bound}. Suppose that the step size satisfies
\[
    \stepsize
    \le
    \min\left\{C_1,\frac{\sqrt{\delta}}{L\sqrt{f(\param_0)+C}}\right\} .
\]
Then the N-RSAV scheme with the operator $\operator_\indexopt=\tilde{\bm{H}}_\indexopt$ satisfies the same conclusions of Theorem~\ref{thm:rsav_convergence_analysis} with  $M= L$.
\end{corollary}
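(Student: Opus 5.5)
The plan is to reduce the corollary to Theorem~\ref{thm:rsav_convergence_analysis}. The only hypothesis of that theorem beyond $L$-smoothness and the step-size restriction is the uniform Loewner bound \eqref{operator_assumption}. So it suffices to show that for every $\indexopt$,
\[
\bm{0}\preceq \approxH_\indexopt \preceq L\,\I^{(d)} .
\]
Once this holds, Theorem~\ref{thm:rsav_convergence_analysis} applies verbatim with $M=L$. The step-size condition is identical, and Lemma~\ref{thm:positive_lower_bound} needs only positive semidefiniteness of $\operator_\indexopt$, which is automatic from $\approxH_\indexopt=\Z_\indexopt\Z_\indexopt^\top$.

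First, I would use convexity and twice differentiability to get $\Hessian(\param_\indexopt)\succeq\bm{0}$. Then $L$-smoothness gives $\|\Hessian(\param_\indexopt)\|\le L$: for any unit $\bm{v}$, $\Hessian(\param)\bm{v}=\lim_{t\to0}(\nabla f(\param+t\bm{v})-\nabla f(\param))/t$ has norm at most $L$. Together these yield $\bm{0}\preceq\Hessian(\param_\indexopt)\preceq L\I^{(d)}$.

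Second, I would check that the truncation step is vacuous in the convex case. The matrix $\W_\indexopt$ is a principal submatrix of the positive semidefinite Hessian, hence positive semidefinite. All $\lambda_i\ge0$, so $\hat{\W}_\indexopt=\W_\indexopt$ and $\approxH_\indexopt=\V_\indexopt\W_\indexopt^\dagger\V_\indexopt^\top$ is exactly the standard Nyström approximation of $\Hessian(\param_\indexopt)$. Lemma~\ref{lem:nystrom_order} then gives $\bm{0}\preceq\approxH_\indexopt\preceq\Hessian(\param_\indexopt)\preceq L\I^{(d)}$, by transitivity of the Loewner order.

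The only delicate point, and the step I expect to be the main obstacle, is matching the construction in \eqref{eq:mat_Z} with the Nyström form in Lemma~\ref{lem:nystrom_order}. I need $\Z_\indexopt\Z_\indexopt^\top=\V_\indexopt\U_\indexopt(\bm{\Sigma}_{\indexopt,+}^{1/2})^\dagger(\bm{\Sigma}_{\indexopt,+}^{1/2})^{\dagger}\U_\indexopt^\top\V_\indexopt^\top$ to equal $\V_\indexopt\W_\indexopt^\dagger\V_\indexopt^\top$. This holds because $((\bm{\Sigma}^{1/2}_{+})^\dagger)^2=\bm{\Sigma}_+^\dagger$ for diagonal nonnegative matrices and $\W^\dagger=\U\bm{\Sigma}_+^\dagger\U^\top$ when $\W\succeq\bm{0}$. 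Zero eigenvalues need no special treatment, since the pseudoinverse handles them consistently. With this identification, all three conclusions (i)–(iii) follow directly from Theorem~\ref{thm:rsav_convergence_analysis}.
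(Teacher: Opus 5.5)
Your proposal is correct and takes the same route as the paper: you establish $\bm{0}\preceq\approxH_\indexopt\preceq\Hessian(\param_\indexopt)\preceq L\I^{(d)}$ from convexity, Lemma~\ref{lem:nystrom_order}, and $L$-smoothness, and then invoke Theorem~\ref{thm:rsav_convergence_analysis} with $M=L$. Your extra check is a welcome addition: you note that eigenvalue truncation is vacuous in the convex case, so that $\Z_\indexopt\Z_\indexopt^\top$ coincides with the standard Nystr\"om approximation to which the lemma applies, and this makes explicit a step the paper's proof leaves implicit.
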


\begin{remark}
The convexity assumption in Corollary~\ref{col:proposed_converge_convex} is imposed to ensure that the Hessian $\bm{H}_f(\param_\indexopt)$ is positive semidefinite, which is required in order to apply Lemma~\ref{lem:nystrom_order}. 
In neural network training, however, the Hessian is generally not  positive semidefinite. 
Nevertheless, the empirical results in \cite{nn_hessian_spectrum_1} suggest that most negative eigenvalues of the Hessian disappear after the initial stage of training.
\end{remark}

\begin{proof}
We verify that the operator $\operator_\indexopt=\tilde{\bm{H}}_\indexopt$ used in the proposed method satisfies the assumption of Theorem~\ref{thm:rsav_convergence_analysis}, namely, that it is positive semidefinite and uniformly bounded above.
Since $f$ is convex, its Hessian $\bm{H}_f(\param_\indexopt)$ is positive semidefinite. Therefore, Lemma~\ref{lem:nystrom_order} implies that
\begin{equation}
    \bm{0}\preceq \tilde{\bm{H}}_\indexopt
    \preceq \Hessian(\param_\indexopt).
    \notag
\end{equation}
Moreover, since $f$ is $L$-smooth and twice differentiable, 
\begin{equation}
    \Hessian(\param_\indexopt)\preceq L\I^{(d)} .
    \notag
\end{equation}
Consequently,
\begin{equation}
    \bm{0}\preceq \tilde{\bm{H}}_\indexopt
    \preceq L\I^{(d)} .
    \notag
\end{equation}
Thus, the assumptions of Theorem~\ref{thm:rsav_convergence_analysis} are satisfied with $M=L$, and the conclusion follows immediately.
\end{proof}

\section{Numerical experiments}
\label{sec:numerics}

In the previous section, we established convergence properties of the proposed N-RSAV method. 
In this section, we investigate its practical performance through numerical experiments. 
In particular, we examine whether the proposed low-rank Hessian approximation can achieve convergence behavior comparable to that obtained using the exact Hessian while reducing the computational cost. 
We also assess the effectiveness of the adaptive Hessian reuse strategy employed in AN-RSAV. 
To this end, we consider both a convex quadratic optimization problem and a practical nonconvex optimization problem arising from the training of physics-informed neural networks (PINNs).

We implemented our method by using Python 3.9.13 and PyTorch 2.8.0+cu128. 
The experiments were conducted on a machine running Ubuntu 20.04.6 LTS, and all experiments were run on four NVIDIA A100 PCIe 40GB GPUs.
In all experiments, unless otherwise specified, we set the constant in the scalar auxiliary variable to $C = 1$, 
$\eta $ in \eqref{eq:cond} to $\eta = 0.99$,
and the hyperparameter in the RSAV operator to $\lambda = 0$. For AN-RSAV, we set the hyperparameters to $\recomputefactor = 1.5$.

\subsection{Experiment 1: Convex quadratic problem}

We first consider a convex optimization problem.
For this problem, we check the effect of incorporating Hessian information.
In particular, we consider an ill-conditioned problem whose Hessian exhibits an effectively low-rank structure, which is expected to be favorable for the proposed Nystr\"om approximation.

We consider the following quadratic objective function:
\begin{equation}
f(\param) = \frac{1}{2}\param^\top \bm{H} \param\quad \param \in \mathbb{R}^d. \notag
\end{equation}
This function is convex, and its minimizer is given by $\param^\ast = (0, \ldots, 0)^\top \in \R^d$, with $f(\param^\ast) = 0$. In this experiment, we set $d = 400$.
The Hessian $\bm{H}\in\R^{d \times d}$ is 
constructed so that its eigenvalue spectrum follows that shown in \cref{fig:hessian_spectrum}.
This yields an ill-conditioned problem whose Hessian has an effectively low-rank structure.

\begin{figure}[htbp]
    \centering
    \includegraphics[width=0.6\textwidth]{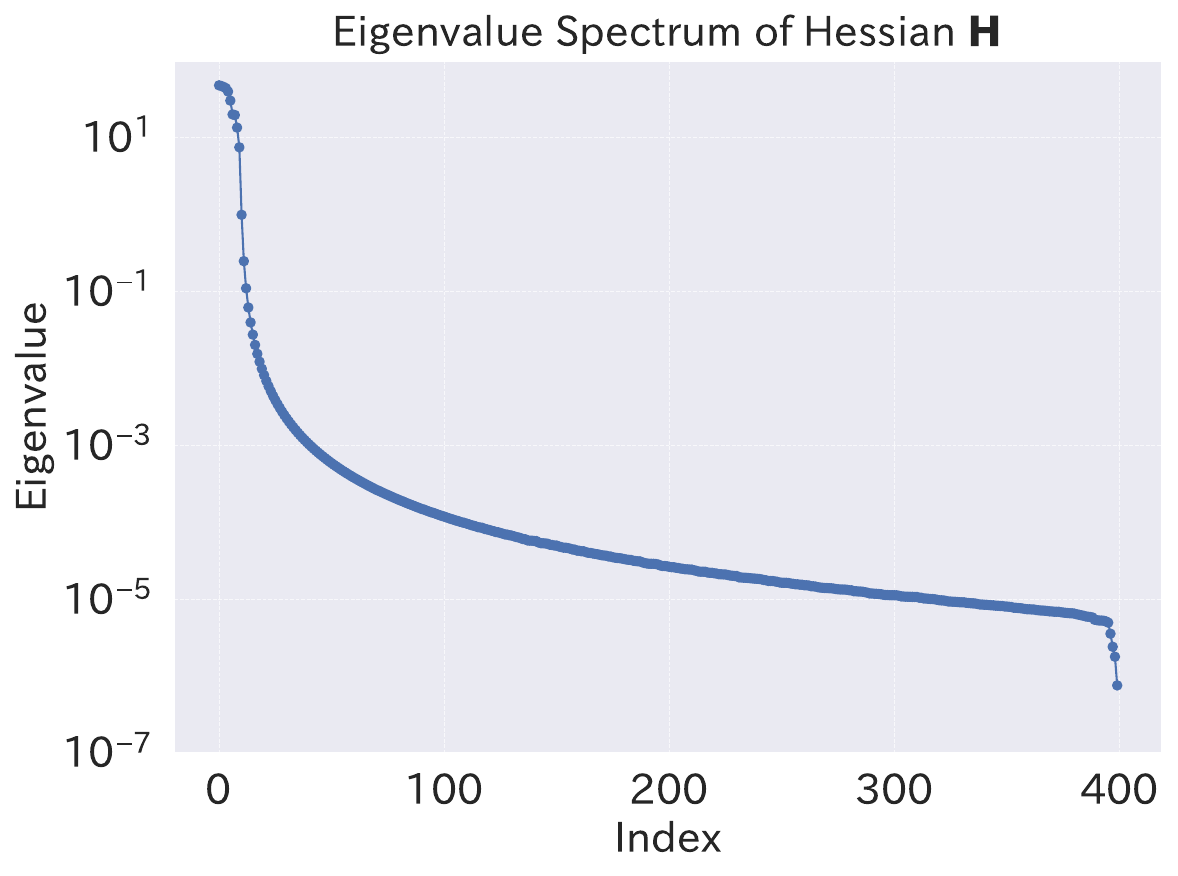}
    \hfill
    \caption{Eigenvalue spectrum of the Hessian $\bm{H}\in \R^{400\times400}$ in Experiment~1. 
    The eigenvalues consist of $\lambda_i^{\mathrm{low}} = i^{-2}$ $(i = 1, \ldots, 390)$ and 10 outlier eigenvalues $\lambda_i^{\mathrm{high}}$ $(i = 1, \ldots, 10)$ independently sampled from the uniform distribution on $[1,10]$. The condition number of $H$ is $2.3016 \times 10^7$.}
    \label{fig:hessian_spectrum}
\end{figure}

We compare the proposed methods, namely, N-RSAV and AN-RSAV, with the conventional RSAV method and a variant that directly incorporates the Hessian into the operator, i.e., $\operator_\indexopt = \bm{H}$, which we refer to as H-RSAV. 
In H-RSAV, the linear system
 $(\bm{I}^{(d)}+\stepsize \bm{H})\bm{x} = \nabla f(\param_\indexopt)$ is computed by using HVPs and the conjugate gradient (CG) method~\cite{CG}.
 Since the effective numerical rank of the Hessian is known in this synthetic example,
the rank parameter for N-RSAV and AN-RSAV is set to $m = 15$.

Table~\ref{tab:quadratic_final_loss} shows the final objective values after $10,000$ iterations for different step sizes. The results indicate that, unless the step size is very small, the methods incorporating Hessian information (H-RSAV, N-RSAV, and AN-RSAV) achieve significantly faster convergence compared to RSAV, and exhibit comparable performance to each other.
However, the computational cost differs significantly among these methods. 
Figure~\ref{fig:quadratic loss} compares the evolution of the objective value with respect to computation time, 
where each method is evaluated using its best-performing step size $\stepsize$ among those tested in Table~\ref{tab:quadratic_final_loss}.
 The results show that N-RSAV, which reduces the computational cost associated with Hessian construction and linear system solves, is approximately $3\times$ faster than H-RSAV. 
 Furthermore, AN-RSAV, which adaptively reuses the approximate Hessian based on the energy deviation, achieves 
 an additional speedup of approximately $3\times$ compared to N-RSAV.

These results demonstrate that, 
when the Hessian admits an effectively low-rank structure,
the proposed methods, N-RSAV and AN-RSAV, 
achieve convergence behavior comparable to that obtained using the exact Hessian, while substantially reducing the computational cost.

\begin{table}[htbp]
\centering
\caption{Final objective values after $10,000$ iterations for different step sizes in Experiment~1.
}
\label{tab:quadratic_final_loss}
\begin{tabular}{c|cccc}
 $\stepsize$ & RSAV & H-RSAV & N-RSAV & AN-RSAV  \\
\hline
0.1  & $0.01391$ & $0.008516$ &$0.008517$ & $0.008509$ \\
1  & $0.01242$ & $0.001830$ & $0.001825$& $0.001824$ \\
10  & $1.05395$& $1.04667\times 10^{-4}$& $1.04715\times 10^{-4}$& $1.04106 \times 10^{-4}$  \\
20 &   $0.69953$  &   $1.62397\times 10^{-5}$  & $1.63579\times 10^{-5}$ &  $1.61991\times 10^{-5}$ \\
30  &  $0.16114$  & $3.23008\times 10^{-6}$&$3.21628 \times 10^{-6}$&$3.20370\times 10^{-6}$ \\
\end{tabular}
\end{table}

\begin{figure}[htbp]
    \centering
    \includegraphics[width=0.6\textwidth]{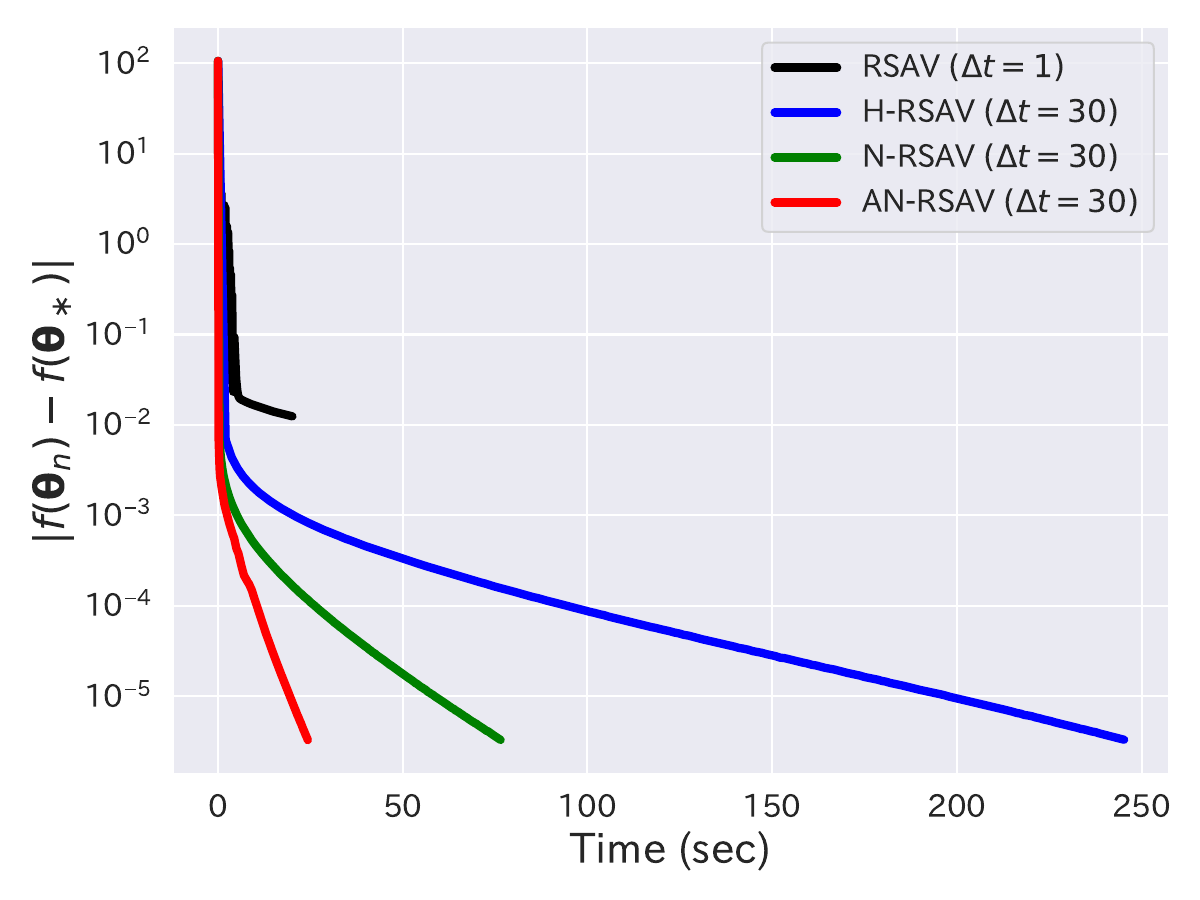}
    \hfill
    \caption{
    Evolution of the objective value with respect to computation time in Experiment~1.
    Each method is evaluated using its best-performing step size among those tested in Table~\ref{tab:quadratic_final_loss}.
    The total computation time for $10,000$ iterations is approximately 20 seconds for RSAV, 246 seconds for H-RSAV, 76 seconds for N-RSAV, and 24 seconds for AN-RSAV.}
    \label{fig:quadratic loss}
\end{figure}

\subsection{Experiment 2: PINNs training for the one-dimensional Burgers equation}

We next apply the proposed methods to the optimization problem arising in physics-informed neural networks (PINNs)~\cite{pinns}, a neural-network-based framework for solving partial differential equations.

The objective function in PINNs is generally nonconvex, and 
the Hessian of the loss function is often highly ill-conditioned~\cite{pinns_ill_condition}.
As a result, 
first-order methods and RSAV, whose search direction coincides with the negative gradient direction, may suffer from slow convergence.
Since the proposed methods incorporate curvature information through Hessian approximations, they are expected to improve optimization efficiency.
Furthermore, 
the Hessian in PINNs is known to exhibit an effectively low-rank structure~\cite{pinns_ill_condition},
which suggests that the proposed methods can be effective even when the rank parameter $m$ is small.

We consider the following one-dimensional time-dependent viscous Burgers equation:
\begin{equation}
    \left\{ \ 
 \begin{aligned}
    &  u_t + uu_x - \nu u_{xx} = 0 \quad x \in (-1,1), \ t\in (0, 1),  \\
    & u(x, 0) = -\sin(\pi x),  \\
    & u(-1, t) =  u(1,t) = 0.
 \end{aligned}
 \right. \label{eq:Burgers_eq}
\end{equation}
This problem is widely used as a benchmark for evaluating PINNs because the solution develops sharp gradients when the viscosity is small.
The viscosity parameter is set to $\nu = 0.01/\pi \approx0.00318$.
For this parameter value, the solution becomes steep, making the optimization problem more challenging.

We employ a fully connected neural network with 9 hidden layers and  20 neurons per layer, resulting in 
$d=3,441$ trainable parameters. The activation function is chosen as $\tanh$, and the parameters are initialized using Xavier initialization~\cite{Xavier_init}.
For the training data in the 
loss function, we use points sampled from a uniform distribution: $10,000$ collocation points in the space-time domain $(-1,1)\times (0,1)$ for the PDE residual loss, 200 points for the initial condition loss, and 200 points for the boundary condition loss.
As a reference solution for evaluating the accuracy of the PINNs approximation, we use a numerical solution obtained by discretizing the spatial domain with a Chebyshev pseudospectral method~\cite{spectral_method} and the temporal domain with the Radau IIA method~\cite{radauIIA}, which is a type of implicit Runge–Kutta method. The test loss is defined as the L2 absolute error between the approximate solution and the reference solution.

Since the loss function is nonconvex, 
we follow the strategy discussed in subsection~\ref{subsec:remarks}:
first use Adam to approach a local minimizer before switching to RSAV-based methods.

In addition, since the proposed methods can be classified as quasi-second-order optimization methods, we also compare them with L-BFGS~\cite{lbfgs}, a quasi-Newton method.
We compare the following five optimization methods:
\begin{itemize}[leftmargin=2em]
    \item Adam (23,000 iter)
    \item Adam (3,000 iter) + L-BFGS (20,000 iter)
    \item Adam (3,000 iter) + RSAV (20,000 iter)
    \item Adam (3,000 iter) + N-RSAV (20,000 iter)
    \item Adam (3,000 iter) + AN-RSAV (20,000 iter)
\end{itemize}
The first method uses Adam alone. The remaining methods perform 3,000 iterations of Adam in the initial stage of optimization, followed by switching to L-BFGS, RSAV, N-RSAV, or AN-RSAV, respectively.
For the hyperparameter settings, the step size of Adam is set to 
$1.0 \times 10^{-3}$. The step size of RSAV is $\stepsize = 1.0 \times 10^{-3}$ while for N-RSAV and AN-RSAV, the step size is 
$\stepsize=8.0$ and the rank parameter is $m=30$.

\cref{fig:pinns_burgers_test_loss} 
compares the evolution of the test loss with respect to both the number of iterations and computation time.
The proposed methods N-RSAV and AN-RSAV achieve the lowest final test losses among all methods considered, outperforming both Adam and Adam combined with L-BFGS.
It is also observed that L-BFGS stagnates 
shortly after the switch from Adam (approximately 300 iterations after the switch).
This behavior is likely due to the difficulties in selecting a suitable positive step size under the strong Wolfe conditions~\cite{back}, which are used to ensure descent directions.
In contrast, the proposed methods, N-RSAV and AN-RSAV, continue to reduce the test loss without stagnation, demonstrating their effectiveness.
Furthermore, AN-RSAV is approximately $3\times$ faster than N-RSAV in terms of computation time required for the same number of iterations. 
This implies the effectiveness of the adaptive Hessian reuse strategy based on the energy deviation.

\cref{fig:pinns_burgers_error_colormap} further compares the pointwise absolute error over the space-time domain.
The solution obtained by AN-RSAV exhibits smaller errors across most of the domain than that obtained by L-BFGS, strongly indicating the improved optimization performance of AN-RSAV.

\begin{figure}[htbp]
    \centering
    \begin{subfigure}{0.48\textwidth}
        \centering
        \includegraphics[width=\textwidth]{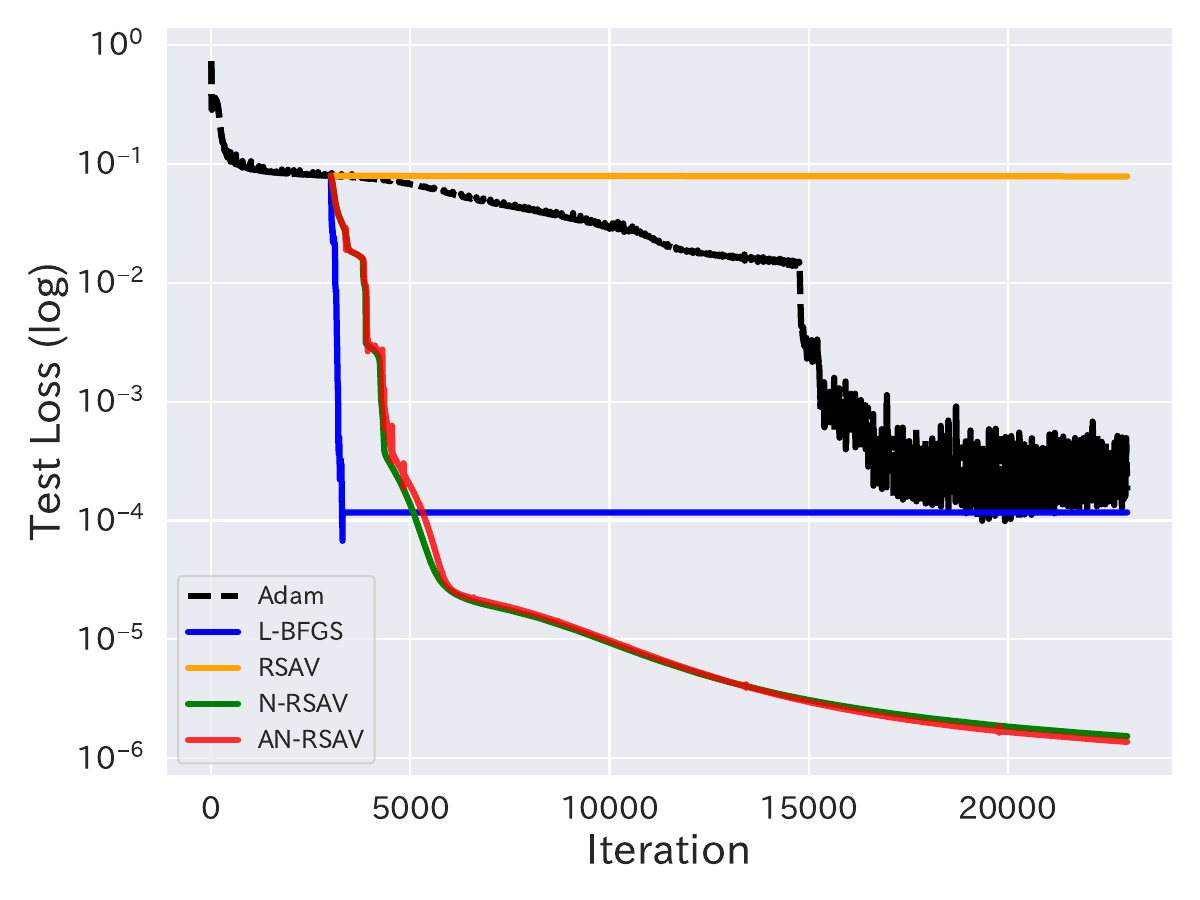}
        \caption{Test loss versus iterations}
    \end{subfigure}
    \hfill
    \begin{subfigure}{0.48\textwidth}
        \centering
        \includegraphics[width=\textwidth]{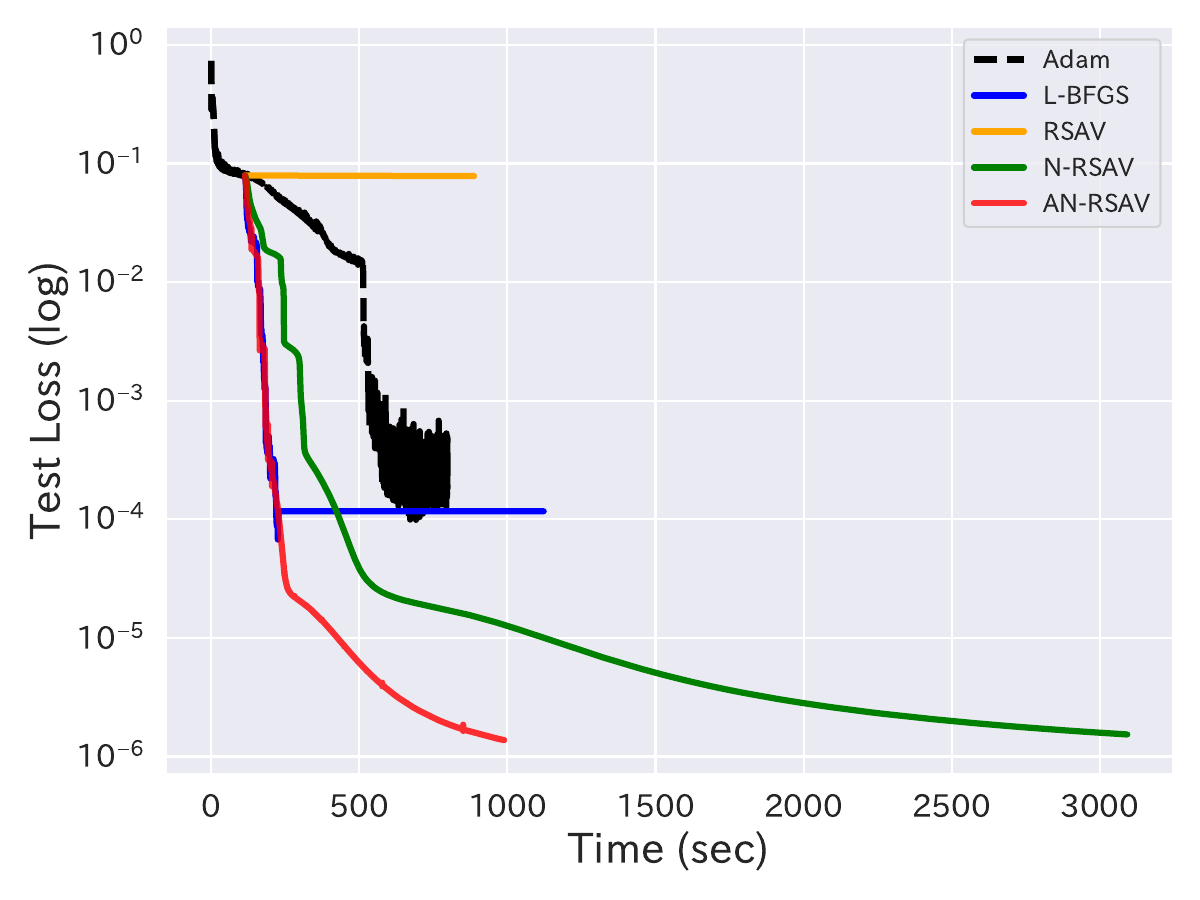}
        \caption{Test loss versus computation time}
    \end{subfigure}
    \caption{
    Comparison of the test loss for PINNs training of the one-dimensional Burgers equation. Left: test loss versus iterations. Right: test loss versus computation time.
    The step size of Adam is set to
    $1.0\times 10^{-3}$, the initial step size of L-BFGS is 1.0, and the step size of RSAV is 
    $\stepsize =1.0 \times 10^{-3}$.
For the proposed methods N-RSAV and AN-RSAV, the hyperparameters are set to $\stepsize = 8.0$ and $ m=30$. 
The final test loss is $1.53338\times 10^{-6}$
 for N-RSAV and $1.37462\times 10^{-6}$ for AN-RSAV.}
    \label{fig:pinns_burgers_test_loss}
\end{figure}

\begin{figure}[htbp]
    \centering
    \includegraphics[width=0.9\textwidth]{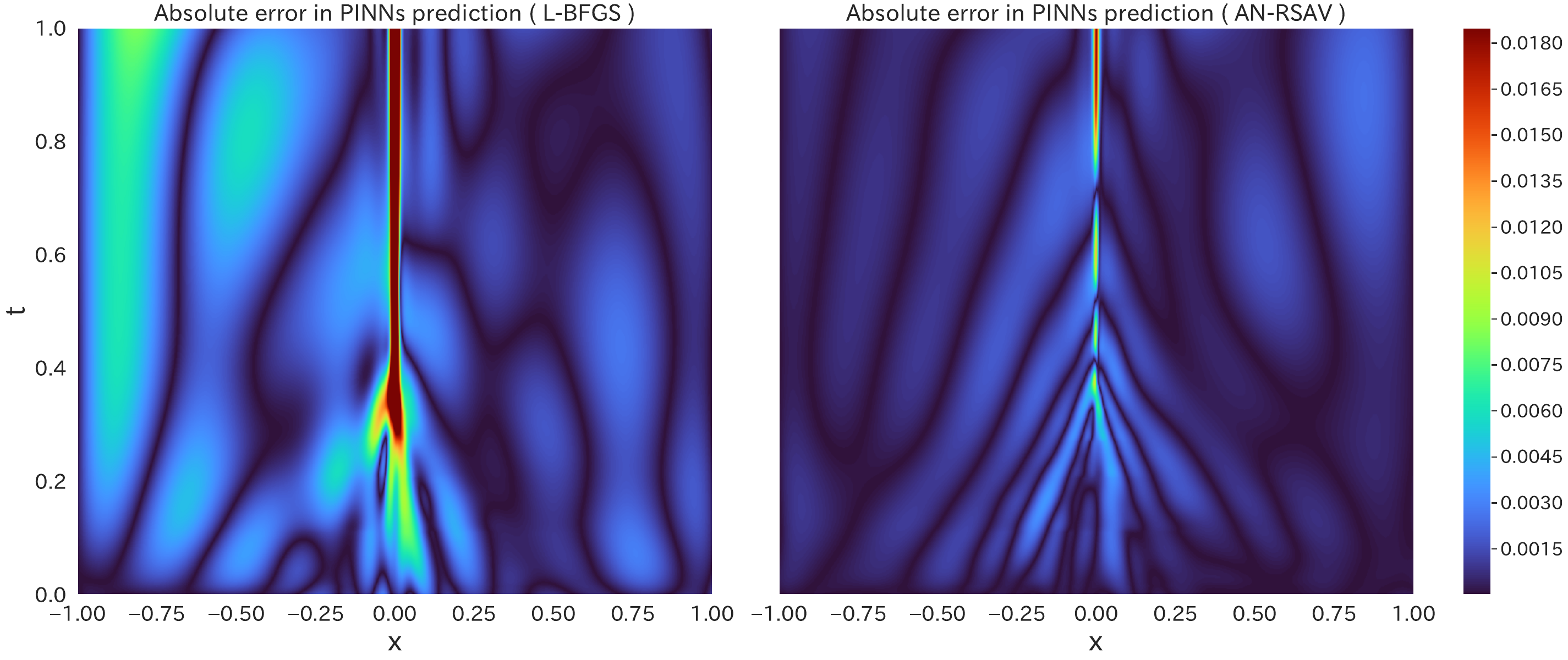}
    \hfill
    \caption{
    Pointwise absolute error between the PINNs approximation and the reference solution over the space-time domain. Left: Adam + L-BFGS. Right: Adam + AN-RSAV. The proposed method yields smaller errors over most of the domain, indicating a more accurate approximation of the Burgers solution.
}
    \label{fig:pinns_burgers_error_colormap}
\end{figure}

\section{Conclusion}
\label{sec:conclusion}
In this paper, we proposed a Nyström-enhanced RSAV method for continuous optimization, which incorporates approximate Hessian information into the RSAV framework. By combining a randomized low-rank approximation with eigenvalue truncation, the proposed method effectively exploits curvature information while preserving the modified dissipation law.
In addition, we introduced an adaptive strategy that reuses the approximate Hessian based on the deviation between the original and modified energies. This mechanism significantly reduces the computational cost 
of
repeatedly constructing the approximate Hessian.
We also established convergence analysis for the RSAV scheme with a general positive semidefinite operator $\operator_\indexopt$ under a uniform upper bound assumption in the Loewner order. 
The analysis proves that the objective function values decrease monotonically.
Under the PL condition, it further establishes convergence of the iterates to a global minimizer and a linear convergence rate for the objective gap. 
In the convex setting, these results imply the corresponding convergence guarantees for the proposed N-RSAV method. 
Moreover, the generality of the analysis may provide a useful theoretical basis for future studies that improve RSAV schemes through the design of problem-dependent operators $\operator_\indexopt$.

Through numerical experiments on both convex quadratic problems and PINNs training for the Burgers equation, we demonstrated that the proposed methods, N-RSAV and AN-RSAV, achieve significantly faster convergence than RSAV. 
In the quadratic optimization problem, N-RSAV was shown to reduce the computational cost per iteration compared with the case where the exact Hessian is directly used as the operator $\operator_\indexopt$.
Furthermore, AN-RSAV maintains performance comparable to that of N-RSAV, while further reducing computational cost by reusing the approximate Hessian.
Moreover, the results on PINNs training suggest that the proposed methods provide an efficient quasi-second-order optimization approach for ill-conditioned problems with an effectively low-rank structure.

A limitation of the proposed methods lies in the choice of the rank parameter $m$, which is treated as a hyperparameter. If $m$ is chosen to be too small, the optimization may fail due to insufficient approximation accuracy. On the other hand, increasing $m$ improves the Hessian approximation accuracy but increases the computational cost per iteration. Therefore, selecting an appropriate value of $m$ is crucial.
Addressing this issue remains an important direction for future work, and approaches such as fast rank estimation methods~\cite{effecive_rank_inference1,effecive_rank_inference2} may provide useful insights.

\section*{Acknowledgments}
This work is supported by JSPS KAKENHI Grant Numbers 21K18301, 24K02951, 24K00540, 25H00449, and 25K21806, MEXT STAR-E NEXT Project (Japan Grant Number JPJ013735), and JST SPRING Grant Number JPMJSP2138. 

\bibliographystyle{siamplain}
\bibliography{refs}
\end{document}